\newtheorem{thm}{Theorem}[section]
\newtheorem{lemma}[thm]{Lemma}
\newtheorem{prob}[thm]{Problem}
\newtheorem{obs}[thm]{Observation}
\begin{document}

\title{On Perfect Matchings in Matching Covered Graphs}

 \author{Jinghua He\thanks{School of Mathematics and Statistics, Lanzhou University, Lanzhou, Gansu 730000, China.},\, Erling Wei\thanks{School of Information, Renmin University of China, Beijing 100872, China. Partially supported by a grant from NSFC (No. 11401576).},\, Dong Ye\thanks{Department of Mathematical Sciences, Middle Tennessee State University,
Murfreesboro, TN 37132, USA. Partially supported by a grant from the Simons Foundation (No. 359516).} \  and Shaohui Zhai\thanks{School of Applied Mathematics, Xiamen University of Technology, Xiamen, Fujian 361024, China.}
}

\maketitle

\begin{abstract}
 Let $G$ be a matching-covered graph, i.e., every edge is contained in a perfect matching. An edge subset $X$ of $G$ is feasible if there exists two perfect matchings $M_1$ and $M_2$ such that $|M_1\cap X|\not\equiv |M_2\cap X| \pmod 2$. Lukot'ka and Rollov\'a proved that an edge subset $X$ of a regular bipartite graph is not feasible if and only if $X$ is switching-equivalent to $\emptyset$, and they further ask whether a non-feasible set of a regular graph of class 1 is always switching-equivalent to either $\emptyset$ or $E(G)$? Two edges of $G$ are equivalent to each other if a perfect matching $M$ of $G$ either contains both of them or contains none of them. An equivalent class of $G$ is an edge subset $K$ with at least two edges such that the edges of $K$ are mutually equivalent. An equivalent class is not a feasible set. Lov\'asz proved that an equivalent class of a brick has size 2. In this paper, we show that, for every integer $k\ge 3$, there exist infinitely many $k$-regular graphs of class 1 with an arbitrarily large equivalent class $K$ such that $K$ is not switching-equivalent to either $\emptyset$ or $E(G)$, which provides a negative answer to the problem proposed by Lukot'ka and Rollov\'a.  Further, we characterize bipartite graphs with equivalent class, and characterize matching-covered bipartite graphs of which every edge is removable.

\end{abstract}

\section{Introduction}

Let $G$ be a graph. A {\em perfect matching} of $G$ is a set of independent edges which covers all vertices of $G$. A graph with a perfect matching is called a {\em matchable graph}. A graph $G$ is {\em $k$-extendable} if $G$ has at least $2k+2$ vertices and, for any $k$ independent edges of $G$, there is a perfect matching containing them.  It has been shown by Plummer \cite{P} that a $k$-extendable graph is $(k+1)$-connected. A 1-extendable graph is also called {\em matching-covered}, or {\em coverable}.
A 2-extendable bipartite graph is called a {\em brace}. By the result of Plummer \cite{P}, a brace is a 3-connected bipartite graph. A {\em brick} is a 3-connected graph such that, for any two vertices $u$ and $v$, $G\backslash \{u,v\}$ has a perfect matching. It is not hard to see that a brick is matching-covered but not bipartite. Plummer \cite{P} proved that a 2-extendable graph is either a brace or a brick. But a brick is not necessarily 2-extendable.
A matching-covered graph can be decomposed into a family of bricks and braces by the Lov\'asz's Tight-Cut Decomposition \cite{L87}.


A set of edges $X$ of a matching-covered graph $G$ is {\em feasible} if $G$ has two perfect matchings $M_1$ and $M_2$ such that $|M_1\cap X|\not\equiv |M_2\cap X|\pmod 2$. Note that, every edge of $G$ is contained by some perfect matchings but avoid by others. So a single edge of a matching-covered graph forms a trivial feasible edge set.
On the other hand, if $X$ is an edge-cut of $G$, the parity of $X\cap M$ depends on the parities of the orders of components of $G\backslash X$ and hence $X$ is always non-feasible.

A matching-covered regular graph may have many distinct perfect matchings. It has been conjectured by Lov\'asz and Plummer \cite{LP} that every matching-covered regular graph has exponentially many perfect matchings, which has been verified by Schrijver \cite{AS} for regular bipartite graphs and by Esperet et. al. \cite{EKKKN} for cubic graphs.  As a matching covered regular graph has many perfect matchings, it seems reasonable to believe that non-feasible edge sets are rare. It can be determined in randomized polynomial time whether a given edge set is feasible or not by using a probabilistic algorithm for exact matching (cf. Section 3.3 in \cite{L95}). Lukot'ka and Rollov\'a \cite{LR} show that the feasible sets in cubic graphs could be used to show the existence of spanning bipartite qudrangulations (cf. \cite{NNO}) and certain cycle covers in signed cubic bipartite graphs \cite{LR}.

Let $v$ be a vertex of $G$ and $E(v)$ be the set of all edges incident with $v$.
For a given edge set $X$, the {\em switching-operation} of $X$ on $E(v)$ is to be defined as the symmetric difference of $E(v)$ and $X$, denoted by $E(v)\oplus X=(E(v)\cup X)\backslash (E(v)\cap X)$. As a perfect matching
always contains exactly one edge from $E(v)$, the symmetric difference $E(v)\oplus X$ is feasible if and only if $X$ is feasible. Two edge sets $X_1$ and $X_2$ are {\em switching-equivalent} if $X_1$ can be obtained from $X_2$ by a series of switching-operations and vice visa. For two switching-equivalent edge sets $X_1$ and $X_2$, by the definition of switching-operation, $X_1$ is feasible if and only if $X_2$ is feasible.

\begin{thm}[Lukot'ka and Rollov\'a, \cite{LR}]
Let $G$ be a regular bipartite graph and $X\subseteq E(G)$. Then $X$ is not feasible if and only if $X$ is switching-equivalent to $\emptyset$.
\end{thm}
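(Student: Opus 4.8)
The plan is to recast the statement in terms of the cycle space $\mathcal C(G)$ and cut space $\mathcal B(G)$ of $G$ over $\mathbb F_2$. A single switching operation replaces $X$ by $X\triangle E(v)$, so the switching class of $\emptyset$ is exactly the $\mathbb F_2$-span of the stars $\{E(v):v\in V(G)\}$, which is $\mathcal B(G)$; since $\mathcal B(G)$ and $\mathcal C(G)$ are orthogonal complements in $\mathbb F_2^{E(G)}$, it follows that $X$ is switching-equivalent to $\emptyset$ if and only if $|X\cap C|$ is even for every cycle $C$ of $G$. With this translation one direction is immediate: if every cycle meets $X$ in an even number of edges, then for any two perfect matchings $M_1,M_2$ the set $M_1\triangle M_2$ is a disjoint union of cycles, hence $|X\cap M_1|\equiv|X\cap M_2|\pmod 2$ and $X$ is not feasible (consistent with the already-noted fact that switching preserves feasibility and that $\emptyset$ is not feasible). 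So the whole content of the theorem is the converse: \emph{if $X$ is not feasible, then $|X\cap C|$ is even for every cycle $C$ of $G$.}

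The converse will rest on the following claim, which I would prove for every matching-covered bipartite graph (connected regular bipartite graphs are matching-covered, and disconnected graphs reduce to their components): \emph{the cycle space $\mathcal C(G)$ is spanned over $\mathbb F_2$ by the symmetric differences $M_1\triangle M_2$ of pairs of perfect matchings of $G$.} Granting this, the converse is one line: if some cycle $C$ had $|X\cap C|$ odd, writing $E(C)=D^{(1)}\triangle\cdots\triangle D^{(t)}$ with each $D^{(j)}=M^{(j)}_1\triangle M^{(j)}_2$ a difference of perfect matchings gives $1\equiv|X\cap C|\equiv\sum_{j}|X\cap D^{(j)}|\pmod 2$, so $|X\cap M^{(j)}_1|\not\equiv|X\cap M^{(j)}_2|\pmod2$ for some $j$, contradicting non-feasibility.

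To prove the claim I would use the ear decomposition of matching-covered bipartite graphs (Lov\'asz; see Lov\'asz--Plummer, \emph{Matching Theory}): there is a chain $K_2=G_1\subset G_2\subset\cdots\subset G_r=G$ with every $G_i$ matching-covered and $G_{i+1}=G_i\cup P_i$, where $P_i$ is an ear of odd length whose two ends $u_i,v_i\in V(G_i)$ lie on opposite sides of the bipartition and whose internal vertices are new. Induct on $i$. Since each internal vertex of $P_i$ has degree $2$ in $G_{i+1}$, every element of $\mathcal C(G_{i+1})$ meets $E(P_i)$ either in all of it or in none of it, so $\mathcal C(G_{i+1})=\mathcal C(G_i)\oplus\langle D_i\rangle$ for \emph{any} single cycle $D_i$ of $G_{i+1}$ through $P_i$; it therefore suffices to produce one such $D_i$ in the span of perfect-matching differences of $G_{i+1}$. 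Because $G_{i+1}$ is matching-covered the ear edge at $u_i$ lies in a perfect matching of $G_{i+1}$, and — the internal vertices of $P_i$ having degree $2$ — this forces that matching to meet $P_i$ in the unique perfect matching of the path $P_i$ covering both of its ends; consequently $G_i$ has a matching $M_{uv}$ covering $V(G_i)\setminus\{u_i,v_i\}$. Pick any perfect matching $M_0$ of $G_i$; then $M_0\triangle M_{uv}$ is one $M_0$-alternating $u_i$--$v_i$ path $R\subseteq G_i$ together with some $M_0$-alternating cycles of $G_i$. Set $D_i:=R\cup E(P_i)$. Let $N$ be the matching of $P_i$ covering only its internal vertices and $N'$ the one covering all of $V(P_i)$, so that $N\triangle N'=E(P_i)$; then $M_0\cup N$ and $M_{uv}\cup N'$ are perfect matchings of $G_{i+1}$ and $(M_0\cup N)\triangle(M_{uv}\cup N')=D_i\ \triangle\ (\text{the cycles of }M_0\triangle M_{uv})$. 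Each of those cycles is $M_0$-alternating, hence is a symmetric difference of two perfect matchings of $G_i$, and perfect matchings of $G_i$ extend to perfect matchings of $G_{i+1}$ by adjoining $N$; so each such cycle, and therefore also $D_i$, lies in the span of perfect-matching differences of $G_{i+1}$. With the inductive hypothesis for $\mathcal C(G_i)$ and the decomposition $\mathcal C(G_{i+1})=\mathcal C(G_i)\oplus\langle D_i\rangle$, this closes the induction; the base $G_1=K_2$ is vacuous.

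The spanning claim is the only real obstacle, and within it the crucial use of bipartiteness is exactly the point that the two ends $u_i,v_i$ of an ear lie on opposite sides, which is what makes $M_{uv}$ — a matching of $G_i$ missing precisely $u_i$ and $v_i$ — available and hence yields the cycle $D_i$ through the new ear. The claim genuinely fails without bipartiteness: in $K_4$ the symmetric differences of perfect matchings span only a $2$-dimensional subspace of the $3$-dimensional cycle space (the triangles are unreachable), which is consistent with the theorem being stated for bipartite graphs only.
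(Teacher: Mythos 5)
Your proposal is correct, but note that the paper never proves this statement: Theorem 1.1 is imported from Lukot'ka and Rollov\'a \cite{LR}, and the only related ingredient the paper records is the translation of ``switching-equivalent to $\emptyset$'' into ``$X=\nabla U$ for some $U$'' (Observation~\ref{obs:equ2}, via Harary's balance lemma), which is exactly your opening identification of the switching class of $\emptyset$ with the cut space $\mathcal B(G)$. Beyond that translation, your argument supplies a complete, self-contained proof: the orthogonality $\mathcal B(G)=\mathcal C(G)^{\perp}$ over $\mathbb F_2$ reduces the theorem to showing that in a (connected, hence matching-covered) regular bipartite graph the symmetric differences of perfect matchings span the cycle space, and your induction along the single-ear decomposition of bipartite matching-covered graphs establishes this cleanly. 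I checked the delicate points and they all go through: a perfect matching of $G_{i+1}$ through the ear edge at $u_i$ is indeed forced to use the end-covering matching of the ear, so $M_{uv}$ exists; $M_0\triangle M_{uv}$ has exactly two odd-degree vertices and so is one alternating $u_i$--$v_i$ path plus alternating cycles; the identity $(M_0\cup N)\triangle(M_{uv}\cup N')=D_i\triangle(\text{cycles})$ holds because the ear edges and the $G_i$-edges are disjoint; the decomposition $\mathcal C(G_{i+1})=\mathcal C(G_i)\oplus\langle D_i\rangle$ is valid for ears of any odd length, including length-one ears (no internal vertices), which your phrasing silently covers; and matchings of $G_i$ lift to $G_{i+1}$ by adjoining $N$, so the inductive hypothesis transfers. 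The reduction of the disconnected case to components is routine but worth one explicit sentence (non-feasibility of $X$ in $G$ gives non-feasibility of each $X\cap E(G_j)$ in $G_j$ by freezing a perfect matching on the other components, and a union of cuts is a cut). Your $K_4$ remark correctly locates where bipartiteness is essential. In short: a valid proof by a route (cycle-space spanning via ear decomposition) that the paper itself does not present, and which has the merit of making the ``only if'' direction transparent rather than cited.
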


Lukot'ka and Rollov\'a \cite{LR} found that the Petersen graph has a non-feasible edge set which is not switching-equivalent to either $\emptyset$ or $E(G)$, and believe that an easy characterization of feasible edge sets for regular non-bipartite graphs seems not possible. More  examples can be found in \cite{NNO}. But all of these examples are not 3-edge-colorable cubic graphs, which are so-called snarks. For regular nonbipartite graphs of class 1, Lukot'ka and Rollov\'a propose the following problem.

\begin{prob}[Lukot'ka and Rollov\'a, \cite{LR}]\label{prob}
Let $G$ be a regular graph of class 1 and let $X$ be a subset of edges of $G$. Is it true that $X$ is not feasible if and only if $X$ is switching-equivalent to either $\emptyset$ or $E(G)$?
\end{prob}

In this paper, we provide a negative answer to the above problem by showing the following result.

\begin{thm}\label{thm:main}
For any integer $k\ge 3$, there are infinitely many $k$-regular nonbipartite graphs of class 1 with a non-feasible set $X$ which is not switching-equivalent to either $\emptyset$ or $E(G)$.
\end{thm}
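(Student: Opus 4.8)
The plan is to exhibit, for each $k\ge 3$ and each large integer $n$, a connected $k$-regular graph $G=G_{k,n}$ of class $1$ together with an equivalent class $K$ with $|K|=2n$, and to show that $K$ is non-feasible but switching-equivalent to neither $\emptyset$ nor $E(G)$. Non-feasibility is automatic once $K$ is an equivalent class of even size: every perfect matching $M$ has $|M\cap K|\in\{0,|K|\}$, and both values are even. For the switching conditions I would first record the elementary identity $\bigoplus_{v\in S}E(v)=\partial(S)$ (the edge cut of $S$) for every $S\subseteq V(G)$; this reduces the two conditions to purely structural ones: $K$ is switching-equivalent to $\emptyset$ iff $K$ is an edge cut $\partial(S)$, and $K$ is switching-equivalent to $E(G)$ iff $E(G)\setminus K$ is an edge cut. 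Since $K$ will be a matching, the first says every cycle of $G$ meets $K$ in an even number of edges, and the second says the graph $G/K$ obtained by contracting the matching $K$ is bipartite. So it suffices to build $G$ and $K$ with: (a) $K$ an equivalent class of even size; (b) some cycle of $G$ meeting $K$ in an odd number of edges; (c) $G/K$ non-bipartite.

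For the construction I would use a \emph{necklace of gadgets}. Fix a small graph $W$ with two designated vertices $p,q$ of degree $k-1$ and all other vertices of degree $k$; take disjoint copies $W^{(1)},\dots,W^{(n)}$ and join them cyclically by edges $g_j$ from $q$ of $W^{(j)}$ to $p$ of $W^{(j+1)}$ (indices mod $n$), obtaining a $k$-regular graph $G$. The gadget must be chosen so that in any perfect matching $M$ exactly one of two things happens at each $W^{(j)}$: either both $g_{j-1},g_j\in M$, in which case the interior of $W^{(j)}$ has a \emph{unique} completion, a "rigid" matching containing a prescribed interior edge $e_j$; or neither $g_{j-1}$ nor $g_j$ lies in $M$, in which case the interior is matched in one of a few ways, \emph{none} of which uses $e_j$. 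One must also arrange that the mixed situation (exactly one of $p,q$ matched outward) admits no completion. Then a short connectivity/induction argument around the cycle forces all gadgets into the same one of the two modes, so $g_j\in M\iff e_j\in M\iff g_i\in M$ for all $i$; hence $K:=\{e_1,\dots,e_n\}\cup\{g_1,\dots,g_n\}$ is an equivalent class with $|K|=2n$, giving (a) and non-feasibility. Varying $n$ yields infinitely many graphs with arbitrarily large $K$.

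The crux — and the step I expect to be hardest — is choosing $W$ so that both (b) and (c) hold. Condition (b) is easy: let $e_j$ lie on a triangle of $W^{(j)}$ whose other two edges are not in $K$, so this triangle meets $K$ in exactly one edge. Condition (c) is the obstacle. Contracting $K$ contracts every $e_j$, and for the naive rigid gadgets (a diamond $K_4-e$, or its small cubic relatives) the gadget becomes \emph{bipartite} once its forced edge is contracted, which makes $G/K$ bipartite and destroys the example. To avoid this, $W$ must carry an additional odd cycle edge-disjoint from $e_j$ and from the linking edges, so that this odd cycle survives in $G/K$; the tension is that rigidity of the forced matching pushes toward a sparse local structure while the extra odd cycle pushes toward a denser one, and the degree budget at $p,q$ is tight — this has to be resolved by building a suitable $W$ explicitly (using a few extra interior vertices) and checking its matching behaviour by hand. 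Finally one must verify that $G$ is $k$-edge-colourable: for $k=3$ most cleanly by producing a Hamiltonian-cycle-plus-perfect-matching decomposition (or by colouring gadget by gadget and extending over the $g_j$), and for general $k$ by using a $k$-regular analogue of $W$ obtained by a standard thickening, or by keeping the cubic core responsible for $K$ and padding with $k-3$ edge-disjoint perfect matchings in an auxiliary part that does not affect the analysis above. These steps together produce the required family, giving the negative answer to Problem~\ref{prob}.
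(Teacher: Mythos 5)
Your reduction of the two switching conditions is correct (switching at the vertices of $S$ changes $X$ to $X\oplus\nabla S$, so $X$ is switching-equivalent to $\emptyset$ iff $X=\nabla S$ and to $E(G)$ iff $E(G)\setminus X=\nabla S$; this is exactly Observation~\ref{obs:equ2}), and your overall strategy --- link copies of a gadget so that the linking edges together with forced interior edges form a large equivalent class, which is then automatically non-feasible --- is the same strategy the paper uses. But the proposal has a genuine gap precisely where you say the crux lies: the gadget $W$ is never constructed. Everything rests on the existence of a $k$-regular-except-at-$p,q$ graph $W$ with an interior edge $e_j$ that is contained in \emph{every} perfect matching of $W\setminus\{p,q\}$ and in \emph{no} perfect matching of $W$, which in addition carries an odd cycle disjoint from $e_j$ and the linking edges, admits a cycle meeting $K$ oddly, and yields a class-1 necklace. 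You explicitly defer this (``has to be resolved by building a suitable $W$ explicitly \dots and checking its matching behaviour by hand''), so no proof of Theorem~\ref{thm:main} is actually given. The general-$k$ step is also not sound as sketched: ``padding with $k-3$ edge-disjoint perfect matchings'' adds edges and hence creates new perfect matchings, which can destroy the equivalence of $K$; the rigidity analysis would have to be redone for the padded graph, not inherited from the cubic core.

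It is worth noting how the paper escapes exactly the tension you identify. First, the gadget can be taken completely explicitly: $K_{k,k}$ minus two independent edges of one colour class, which has four vertices of degree $k-1$; the copies are joined by a two-strand chain (closed off by the edges $u_1^1u_2^1$ and $v_1^mv_2^m$), the freed colour recolours all new edges, so class 1 is immediate, and the $2$-edge-cuts between consecutive copies force the new edges to form an equivalent class $K$ of size $2m$. Second --- and this is the idea your plan is missing --- one does not need the whole class $K$ to violate both switching conditions: any subset of an equivalent class of even size is still non-feasible, so one is free to take $X$ to be a proper subset of $K$ (one strand plus one end edge). Then $G\setminus X$ is connected, so $X$ is not a cut, and $G\setminus X$ still contains an odd cycle, so $E(G)\setminus X$ is not a cut either. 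This removes the need for $G/K$ itself to be non-bipartite, which is the constraint that was forcing you into a delicate gadget design. (Minor further simplification for your scheme: the ``mixed'' case at a gadget, with exactly one of $p,q$ matched outward, is impossible for parity reasons alone whenever $|V(W)|$ is even, so that requirement need not be engineered.)
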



\begin{figure}[!hbtp] \refstepcounter{figure}\label{fig1}
\begin{center}
\includegraphics[scale=.8]{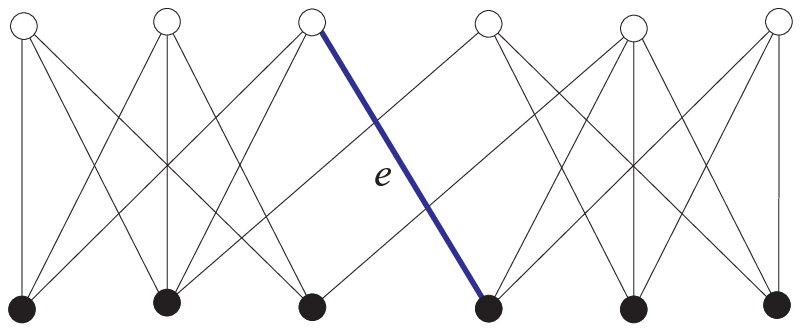}\medskip

{Figure~\ref{fig1}: A 3-connected bipartite graph with a non-removable edge $e$.}
\end{center}
\end{figure}

An edge $e$ of a matching-covered graph $G$ is {\em removable} if $G\backslash \{e\}$ is still matching-covered. A removable edge is also called a removable ear in Ear Decomposition of matching-covered graph \cite{CL, L83}, which provides a fundamental construction of matching-covered graphs \cite{CLM, L83, ZS} (see also \cite{LP}). A graph $G$ is {\em strongly coverable} if every edge of $G$ is removable. A strongly coverable graph is also called a graph with property $E(1,1)$ (cf. \cite{AHPP}). Note that a 2-extendable graph is strongly coverable \cite{P2}. Therefore, any two independent edges of a 2-extendable graph $G$ form a feasible set of $G$. Aldred et. al. \cite{AHPP} show that a strongly coverable bipartite graph is 3-connected. But a 3-connected bipartite graph is not necessarily strongly coverable. The bipartite graph in Figure~\ref{fig1} is 3-connected but not strongly coverable.

A matchable bipartite graph $G(A,B)$ is always balanced, i.e. $|A|=|B|$.  For two subsets $X$ and $Y$ of $V(G(A,B))$, let $E[X,Y]$ denote the set of all edges joining a vertex in $X$ and a vertex in $Y$. In this paper, we characterize all strongly coverable bipartite graphs as follows.

\begin{thm}\label{thm:main3}
Let $G(A,B)$ be a matching-covered bipartite graph. Then $G(A,B)$ is strongly coverable
 if and only if every edge-cut $S$ separating $G(A,B)$ into two balanced components $G_1(A_1,B_1)$ and $G_2(A_2,B_2)$  satisfies that $|E[A_1, B_2]|\ge 2$ and $|E[B_1,A_2]|\ge 2$.
\end{thm}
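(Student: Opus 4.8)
The plan is to route everything through the classical links between removable edges, forced edges, and Hall--K\"onig deficiency. The basic tool, used repeatedly, is the elementary fact that if $G(A,B)$ is matching-covered then $|A|=|B|$ and $|N_G(X)|\ge|X|+1$ for every $X$ with $\emptyset\neq X\subsetneq A$ (and symmetrically for subsets of $B$): were $|N_G(X)|=|X|$, then in every perfect matching $X$ is matched onto $N_G(X)$; since $G$ is connected and $|X\cup N_G(X)|=2|X|<|V(G)|$, some edge joins a vertex $b\in N_G(X)$ to a vertex $a\in A\setminus X$, and in every perfect matching $b$ is matched into $X$, so that edge lies in no perfect matching, contradicting that $G$ is matching-covered.

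\emph{Necessity.} Let $S$ be an edge-cut separating $G$ into balanced components $G_1(A_1,B_1)$ and $G_2(A_2,B_2)$, so $S=E[A_1,B_2]\cup E[B_1,A_2]$. For any perfect matching $M$, comparing the number of vertices of $A_1$ matched by $M$ within $G_1$ with the number of vertices of $B_1$ matched within $G_1$ and using $|A_1|=|B_1|$ gives $|M\cap E[A_1,B_2]|=|M\cap E[B_1,A_2]|$; in particular neither set is empty (otherwise an edge of $S$ would lie in no perfect matching). Suppose now $E[A_1,B_2]=\{e\}$ with $e=a_0b_0$. Pick any $g\in E[B_1,A_2]$; then $e$ and $g$ are vertex-disjoint, and if $g\in M$ then $|M\cap E[B_1,A_2]|\ge1$, hence $|M\cap E[A_1,B_2]|\ge1$, so $e\in M$. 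Thus every perfect matching containing $g$ contains $e$, so $g$ lies in no perfect matching of $G\setminus e$ and $e$ is not removable, contradicting strong coverability. Hence $|E[A_1,B_2]|\ge2$, and by symmetry $|E[B_1,A_2]|\ge2$.

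\emph{Sufficiency.} I would argue by contraposition: assume some $e=a_0b_0\in E(G)$ ($a_0\in A$, $b_0\in B$) is not removable, and build a cut violating the condition. Since $G$ is matching-covered the bound above prevents $e$ from being forced, so $G\setminus e$ has a perfect matching; as $G\setminus e$ is not matching-covered, some edge $f=uv$ ($u\in A$, $v\in B$) lies in no perfect matching of $G\setminus e$, i.e. $(G\setminus e)-u-v$ has no perfect matching. Applying Hall's condition to $(G\setminus e)-u-v$ and comparing neighbourhoods in $G\setminus e$ with those in $G$ through the bound, one extracts a set $X$ with $a_0\in X\subsetneq A$ for which $|N_G(X)|=|X|+1$, $b_0\in N_G(X)$, $a_0$ is the only neighbour of $b_0$ in $X$, and $v\in N_G(X)\setminus\{b_0\}$ has the neighbour $u\in A\setminus X$. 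Set $A_1:=X$, $B_1:=N_G(X)\setminus\{b_0\}$, $A_2:=A\setminus X$, $B_2:=(B\setminus N_G(X))\cup\{b_0\}$, $G_i:=G[A_i\cup B_i]$, and let $S$ be the set of edges of $G$ between $A_1\cup B_1$ and $A_2\cup B_2$. Then $|A_1|=|B_1|$ and $|A_2|=|B_2|$, and since $X$ has no neighbour outside $N_G(X)$ while the only neighbour of $b_0$ in $X$ is $a_0$, we get $E[A_1,B_2]=E[X,\{b_0\}]=\{e\}$. So, once $G_1$ and $G_2$ are known to be connected, $S$ separates $G$ into two balanced components with $|E[A_1,B_2]|=1<2$, contradicting the hypothesis.

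The step I expect to be the crux is showing $G_1$ and $G_2$ connected. If $G_1$ had components $C^1,\dots,C^k$ (write $C^i_A=C^i\cap A$, $C^i_B=C^i\cap B$) with $a_0\in C^1$, then for $i\ge2$ the vertex $b_0$ has no neighbour in $C^i_A\subseteq X\setminus\{a_0\}$, so $N_G(C^i_A)\subseteq C^i_B$ and the bound yields $|C^i_B|\ge|C^i_A|+1$, while $|C^1_B|\ge|C^1_A|$; summing and using $\sum_i|C^i_A|=|X|=|N_G(X)|-1=\sum_i|C^i_B|$ forces $0\ge k-1$, so $k=1$. Symmetrically, if $G_2$ had components $D^1,\dots,D^l$ with $b_0\in D^1$, then for $j\ge2$ every vertex of $D^j_B$ lies in $B\setminus N_G(X)$, hence has all its neighbours inside $D^j_A$, giving $|D^j_A|\ge|D^j_B|+1$, whereas $|D^1_A|\ge|D^1_B|$ since the only neighbour of $D^1_B$ outside $A\setminus X$ is $a_0$; the same count forces $l=1$. (A would-be isolated component on one side is ruled out using $2$-connectivity of a matching-covered graph together with the neighbourhood bound, exactly as in the proof of that bound.) Beyond this, the only non-bookkeeping work is the extraction of $X$ from Hall's theorem.
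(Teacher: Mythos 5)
Your proof is correct, and it takes a genuinely different route from the paper's. The paper proves sufficiency through the Dulmage--Mendelsohn decomposition of $G\setminus e$: by Lemma~\ref{lem:acyclic} the DM digraph $D$ is acyclic, and since putting $e$ back restores a matching-covered graph, Theorem~\ref{thm:edge} forces $D$ to have a unique source $Q_1$ and unique sink, so $e$ joins $Q_1\cap A$ to the sink's $B$-side and $S=\nabla V(Q_1)$ is the violating cut. You instead work directly with Hall deficiency: an edge $uv$ allowed in $G$ but not in $G\setminus e$ gives $Y\subseteq A\setminus\{u\}$ with $|N_{(G\setminus e)-u-v}(Y)|<|Y|$, and the chain $|Y|-1\ge |N_{G\setminus e}(Y)\setminus\{v\}|\ge|N_{G\setminus e}(Y)|-1\ge|N_G(Y)|-2\ge|Y|-1$ is tight throughout, which is exactly what produces the properties of $X$ you list ($a_0\in X$, $|N_G(X)|=|X|+1$, $b_0$ seeing only $a_0$ in $X$, $v\ne b_0$ in $N_G(X)$); your partition then gives $E[A_1,B_2]=\{e\}$. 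What your approach buys: it is self-contained (no DM machinery, no Theorem~\ref{thm:edge}), and it explicitly verifies that both sides of the cut are connected, so that $S$ really separates $G$ into exactly two balanced components --- a point the paper asserts but does not check (there it follows from the DM structure, since in the acyclic digraph every vertex has a directed path to the unique sink avoiding the source). What the paper's route buys is brevity once the DM lemmas are in place, plus structural information about the two sides of the cut. Two small things to tidy in a final write-up: (i) include the tightness chase above, since it is the only place the key facts about $X$ are actually produced; (ii) your parenthetical that one-sided components are ``ruled out by 2-connectivity'' is accurate for components inside $A_1$ or inside $B_2$, but a vertex of $A\setminus X$ all of whose neighbours lie in $N_G(X)\setminus\{b_0\}$ is not excluded a priori --- it is simply harmless, because such a component still satisfies $|D^j_A|\ge|D^j_B|+1$ and your count disposes of it anyway.
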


Two edges of a matching-covered graph $G$ are {\em equivalent} to each other if a perfect matching of $G$ either contains both of them or contains none of them. An {\em equivalent class} of $G$ is a subset of $E(G)$ with at least two edges such that any two edges of $K$ are equivalent to each other. An equivalent class of a matching-covered graph is not a feasible set. A matching-covered graph with an equivalent class $K$ is not strongly coverable because any edge of $K$ is not removable. However, a matching-covered graph without an equivalent class may not be strongly coverable, even for bipartite graphs. For example, the graph in Figure \ref{fig1} has no equivalent class but does have a non-removable edge $e$ and hence is not strongly coverable.


\begin{thm}[Lov\'asz, \cite{L87}]\label{thm:lovasz}
Let $G$ be a brick and $K$ be an equivalent class. Then $|K|=2$ and $G\backslash K$ is bipartite.
\end{thm}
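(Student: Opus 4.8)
The plan is to exploit the brick structure, in particular the abundance of perfect matchings guaranteed by bricks being bicritical and $3$-connected. Let $K=\{e_1,e_2,\dots\}$ be an equivalent class, say with $e_1=xy$. Since $G$ is matching-covered, some perfect matching $M$ contains $e_1$, hence contains every edge of $K$; in particular the edges of $K$ are pairwise vertex-disjoint. Write $K=\{a_1b_1,\dots,a_mb_m\}$. The first step is to show $G\backslash K$ is bipartite. The idea is to consider the spanning subgraph $H$ formed by the union of all perfect matchings of $G$; since $G$ is matching-covered, $H=G$. Every perfect matching either contains all of $K$ or is disjoint from $K$. Take any perfect matching $N$ disjoint from $K$ (one exists: remove the two ends of some non-$K$ edge $f$ and extend $f$ to a perfect matching using bicriticality, noting $f\notin K$ forces $N$ to miss $K$). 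Then $M\triangle N$ is a disjoint union of alternating cycles, each of even length, and each such cycle either contains all edges of $K$ it meets or none — but an alternating cycle through $a_ib_i$ alternates $M$ and $N$, so it uses $a_ib_i\in M$ and must leave $a_i$ and $b_i$ by $N$-edges; tracking this shows $M\cap K$ lies on these cycles. The key claim to establish is a proper $2$-coloring of $V(G\backslash K)$: color the vertices so that for each edge $a_ib_i\in K$, $a_i$ and $b_i$ receive opposite colors, and argue that every edge of $G\backslash K$ joins vertices of opposite color. If some edge $uv\in E(G\backslash K)$ joined two vertices of the same color, one produces a perfect matching containing $uv$ together with a parity-violating selection from $K$, contradicting that $K$ is an equivalent class.

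Concretely, I would argue the bipartition via a parity/switching argument on the cut. Consider the cut $\partial(U)$ for suitable $U$; the core fact is that for any perfect matching $P$, $|P\cap K|$ is always $0$ or $|K|$, so $|K|$-many edge-slots behave as a single "super-edge." Contracting each $a_ib_i$ (or rather deleting $K$ and identifying $a_i$ with $b_i$) should yield a matching-covered graph, and the image of a perfect matching missing $K$ is a perfect matching of the contracted graph. Running the standard result that the union of all perfect matchings through a fixed vertex, together with bicriticality, forces bipartiteness of $G\backslash K$: an odd cycle in $G\backslash K$ would be avoidable by some perfect matching not using $K$ yet could be "rotated" to hit an odd number of $K$-ends, producing $P'$ with $|P'\cap K|\notin\{0,|K|\}$.

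For $|K|=2$: suppose $|K|\ge 3$, with $a_1b_1,a_2b_2,a_3b_3\in K$. Using that $G\backslash K$ is bipartite with parts where each $a_i$ and $b_i$ are on opposite sides, the cut $K$ separates the two color classes; but $G$ is a brick, hence non-bipartite and $3$-connected with no nontrivial tight cuts in the relevant sense. The decisive step is to find a perfect matching $P$ with $|P\cap K|=2$: take the bicritical property applied to deleting $a_1$ and $b_2$ (two vertices), extend to a perfect matching of $G\backslash\{a_1,b_2\}$; add back... actually, pick two vertices $a_1,a_2$ from the same side of the bipartition of $G\backslash K$ — then $G\backslash\{a_1,a_2\}$ has a perfect matching $P_0$, and $P_0\cup\{a_1b_1,a_2b_2\}$ need not be a matching since $b_1,b_2$ may be covered by $P_0$; instead delete $b_1$ and $a_2$ (opposite sides), get $P_0$ a perfect matching of $G-b_1-a_2$, observe $P_0$ cannot contain $a_1b_1$ or $a_2b_2$ or $a_3b_3$ necessarily... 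This is where care is needed: I would instead directly show that if $|K|\ge 3$ then using $3$-connectivity one can route an $M$–$N$ alternating path from $a_3$ to a vertex making $a_1b_1,a_2b_2\in P$ but $a_3b_3\notin P$, the desired contradiction.

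\textbf{Main obstacle.} The hard part is the construction of a perfect matching $P$ with $0<|P\cap K|<|K|$ under the assumption $|K|\ge 3$; everything rests on producing this parity violation from the brick axioms. The bipartiteness of $G\backslash K$ is comparatively routine via alternating-cycle arguments, but combining $3$-connectivity with bicriticality to guarantee the "partial hit" of $K$ — essentially a controlled extension of a near-perfect matching avoiding a designated edge while using two prescribed disjoint edges — is the delicate combinatorial core, and I expect it requires an exchange/augmenting-path argument along an $M\triangle N$ component chosen to pass through exactly two of the three edges $a_1b_1,a_2b_2,a_3b_3$.
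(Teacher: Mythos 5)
The paper itself gives no proof of Theorem~\ref{thm:lovasz}; it is quoted from Lov\'asz \cite{L87}. So your proposal has to stand on its own as a complete argument, and it does not. One local error: your construction of a perfect matching $N$ disjoint from $K$ is wrong as stated. Extending an arbitrary edge $f\notin K$ to a perfect matching does not force that matching to miss $K$: if $f$ is vertex-disjoint from every edge of $K$, the extension may well contain all of $K$ (an edge outside $K$ is not "anti-equivalent" to $K$). The fix is easy --- pick $f\notin K$ sharing an endpoint with some edge of $K$, which exists since $G$ is $3$-connected --- but as written the step fails.

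More seriously, both halves of the theorem are left unproved. For the bipartiteness of $G\backslash K$ you only assert that an odd cycle "could be rotated to hit an odd number of $K$-ends"; no mechanism is given for converting an odd cycle of $G\backslash K$ into a perfect matching $P'$ with $|P'\cap K|\notin\{0,|K|\}$, and that conversion is exactly the nontrivial content of this half (an odd cycle is not alternating for any matching, so the usual symmetric-difference rotation does not apply to it directly). For $|K|=2$ you explicitly concede that producing a perfect matching $P$ with $0<|P\cap K|<|K|$ is your "main obstacle," and the route you gesture at --- an $M\triangle N$ alternating component passing through exactly two of three prescribed edges of $K$ --- is not available for free: the components of $M\triangle N$ are whatever they are, and nothing in bicriticality or $3$-connectivity alone lets you choose a component through a prescribed pair of $M$-edges while avoiding a third. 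Forcing such a partial hit of $K$ is precisely where Lov\'asz needs the full brick machinery, and his argument in \cite{L87} is substantially more involved than bicriticality applied to two deleted vertices. As it stands, the proposal is a plan with the two decisive steps missing, not a proof.
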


In this paper, we obtain a characterization for bipartite graphs with an equivalent class as follows.

\begin{thm}\label{thm:main2} Let $G(A,B)$ be a matching-covered bipartite graph. Then $G(A,B)$ has an equivalent class if and only if $G(A,B)$ has a 2-edge-cut which separates $G(A,B)$ into two balanced components.
\end{thm}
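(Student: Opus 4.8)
Suppose $S=\{f_1,f_2\}$ is a $2$-edge-cut separating $G(A,B)$ into balanced components $G_1(A_1,B_1)$ and $G_2(A_2,B_2)$. Fix any perfect matching $M$ of $G$. The edges of $M$ lying inside $G_1$ form a matching of the bipartite graph $G_1$, hence cover equally many vertices of $A_1$ and of $B_1$; since $M$ covers all of $A_1\cup B_1$ and $|A_1|=|B_1|$, the set $M\cap S$ must contain exactly as many edges whose $V(G_1)$-end lies in $A_1$ as edges whose $V(G_1)$-end lies in $B_1$. If both $f_1$ and $f_2$ had their $V(G_1)$-end in $A_1$, this would force $M\cap S=\emptyset$ for \emph{every} perfect matching $M$, contradicting that $G$ is matching-covered; the same holds with $B_1$ in place of $A_1$. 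Hence, after relabelling, $f_1$ has its $V(G_1)$-end in $A_1$ and $f_2$ has its $V(G_1)$-end in $B_1$, and the count above becomes $\mathbf{1}[f_1\in M]=\mathbf{1}[f_2\in M]$. Thus every perfect matching of $G$ contains both of $f_1,f_2$ or neither, so $S$ is an equivalent class.

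\textbf{The ``only if'' direction: the main step.} Let $K$ be an equivalent class and pick two edges $e_1=a_1b_1$ and $e_2=a_2b_2$ of $K$. They are vertex-disjoint: if they shared a vertex, no perfect matching could contain both, so by equivalence none would contain either, contradicting that $G$ is matching-covered. In particular $|V(G)|\ge 4$, so $G$ has no edge lying in every perfect matching, and there are perfect matchings $M\supseteq\{e_1,e_2\}$ and $M'$ with $M'\cap\{e_1,e_2\}=\emptyset$. The crucial point is to produce a function $\lambda\colon V(G)\to\mathbb{R}$ with
\[
\lambda_u+\lambda_v=\mathbf{1}[uv=e_1]-\mathbf{1}[uv=e_2]\qquad\text{for every edge }uv\text{ of }G .
\]
Such a $\lambda$ exists because the linear functional $x_{e_1}-x_{e_2}$ on $\mathbb{R}^{E(G)}$ vanishes at every perfect matching of $G$ (by the equivalence of $e_1$ and $e_2$), hence on the whole perfect matching polytope of $G$. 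As $G$ is bipartite and matching-covered, that polytope equals $\{x\ge 0:\sum_{e\ni v}x_e=1\text{ for all }v\in V(G)\}$ and, averaging perfect matchings through the individual edges, it contains a point with all coordinates positive; so it is full-dimensional inside the affine subspace $L=\{x:\sum_{e\ni v}x_e=1\text{ for all }v\}$. Therefore $x_{e_1}-x_{e_2}$ is constant on $L$, which means $\mathbf{1}_{e_1}-\mathbf{1}_{e_2}$ lies in the row space of the vertex--edge incidence matrix $N$; writing $\mathbf{1}_{e_1}-\mathbf{1}_{e_2}=N^{T}\lambda$ gives the displayed identity.

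\textbf{The ``only if'' direction: conclusion.} Since $e_1$ and $e_2$ are disjoint, no vertex of $G$ is isolated in $G\backslash\{e_1,e_2\}$: such a vertex would have degree $1$ in $G$, and then its unique incident edge would lie in every perfect matching, contradicting what was noted above. Hence on each component of $G\backslash\{e_1,e_2\}$ every edge $uv$ satisfies $\lambda_u+\lambda_v=0$, so $\lambda$ is constant on the $A$-vertices and constant on the $B$-vertices of that component, with opposite values. If $a_1$ and $b_1$ were in the same component we would get $\lambda_{a_1}+\lambda_{b_1}=0\neq 1$; hence $a_1$ and $b_1$ lie in different components of $G\backslash\{e_1,e_2\}$, and likewise $a_2$ and $b_2$. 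Because $G$ is $2$-edge-connected (being matching-covered), $G\backslash\{e_1,e_2\}$ has exactly two components $H_1\ni a_1$ and $H_2\ni b_1$, and $a_2,b_2$ are split between $H_1$ and $H_2$; in either case each of $e_1,e_2$ joins $H_1$ to $H_2$, so $E[V(H_1),V(H_2)]=\{e_1,e_2\}$ is a $2$-edge-cut of $G(A,B)$. Finally $M'$ uses no edge between $H_1$ and $H_2$, so it restricts to perfect matchings of $H_1$ and of $H_2$, which are therefore balanced; this finishes the proof. The main obstacle is precisely the main step above --- producing $\lambda$, i.e.\ knowing that the perfect matching polytope of a matching-covered bipartite graph spans the whole affine subspace cut out by the degree equations. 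Once $\lambda$ is available, everything else is routine bookkeeping with edge-connectivity and parity.
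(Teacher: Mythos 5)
Your proposal is correct, but the ``only if'' direction takes a genuinely different route from the paper's. The paper argues combinatorially: for two equivalent edges $e,e'$ it forms the Dulmage--Mendelsohn decomposition of $G\setminus e$ and its acyclic digraph $D$, and uses Theorem~\ref{thm:edge} (a single added arc makes $D$ strongly connected, so $D$ has a unique source and sink) to show first that the arc carrying $e'$ is a cut-arc of $D$ and then that $\{e,e'\}$ itself is a $2$-edge-cut with balanced sides. You instead argue polyhedrally: equivalence of $e_1,e_2$ makes the functional $x_{e_1}-x_{e_2}$ vanish on the perfect matching polytope, which for a matching-covered bipartite graph equals the fractional polytope $\{x\ge 0: Nx=\mathbf 1\}$ ($N$ the vertex--edge incidence matrix) and contains a strictly positive point, hence has affine hull $\{x: Nx=\mathbf 1\}$; therefore $\mathbf 1_{e_1}-\mathbf 1_{e_2}=N^{T}\lambda$, and the resulting vertex potential forces $a_1,b_1$ (and $a_2,b_2$) into different components of $G\setminus\{e_1,e_2\}$, so that $\{e_1,e_2\}$ is a $2$-edge-cut whose sides are balanced, witnessed by a perfect matching avoiding both edges. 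In effect you give an independent proof of the Carvalho--Lucchesi--Murty fact quoted in the paper's closing Remark, that two equivalent edges of a matching-covered bipartite graph form an edge cut. What each buys: your argument is shorter and self-contained modulo the classical integrality of the bipartite perfect matching polytope (total unimodularity/Birkhoff), whereas the paper's proof deliberately stays inside the Dulmage--Mendelsohn machinery it develops anyway for Theorems~\ref{thm:edge} and \ref{thm:main3}, needing no polyhedral input. The auxiliary facts you use without proof --- that a matching-covered bipartite graph on at least four vertices has minimum degree two, is $2$-edge-connected, and has no edge lying in every perfect matching --- all follow in a line from Lemma~\ref{lem:LP} (or Plummer's $2$-connectivity), so they are harmless; your ``if'' direction is the same parity argument as the paper's, merely refined to track which side of the bipartition each cut edge meets.
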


The above result implies that a 3-connected matching-covered bipartite graph has no equivalent class. Therefore, a brace has no equivalent class. Together with Theorem~\ref{thm:lovasz}, a final graph in the Lov\'asz's Tight-Cut Decomposition either has no equivalent class or has an equivalent class of size two.

Let $\mathcal F_{\mbox{mc}}$, $\mathcal F_{\mbox{sc}}$, $\mathcal F_{\mbox{2-ext}}$ and $\mathcal F_{\mbox{nec}}$ denote the families of matching-covered graphs, strongly coverable graphs, 2-extendable graphs and graphs without equivalent class, respectively. Then we have the following nested relation:
\[\mathcal F_{\mbox{2-ext}}\subsetneq \mathcal F_{\mbox{sc}}\subsetneq \mathcal F_{\mbox{nec}}\subsetneq \mathcal F_{\mbox{mc}}.\]
In Section 2, we are going to prove Theorem~\ref{thm:main}. The proofs of Theorems~\ref{thm:main3} and \ref{thm:main2} are given in Section 3.

\section{Proof of Theorem~\ref{thm:main}}

 A signed graph $(G,\sigma)$ is a graph associated with a mapping $\sigma:E(G)\to \{-1,1\}$ which is called a signature. Let $E ^-(G,\sigma)=\{e \ | \sigma(e)=-1\}$.  Two signed graphs $(G,\sigma_1)$ and $(G,\sigma_2)$ are {\em switching-equivalent} if $E ^-(G,\sigma_1)$  is {\em switching-equivalent} to $E ^-(G,\sigma_2)$.
 A signed graph $(G,\sigma)$ is {\em balanced} if its negative edge set is {\em switching-equivalent} to the empty set.  For a subset $U\subseteq V(G)$, let $\nabla U$ denote the set of all edges joining a vertex in $U$ and a vertex in $V(G)\backslash U$. The following is a characterization of a balanced signed graph.

\begin{lemma}[Harary, \cite{FH}]\label{prop:equ}
A signed graph $(G,\sigma)$ is balanced if and only if  $E^-(G,\sigma)=\nabla U$ for some $U\subseteq V(G)$.
\end{lemma}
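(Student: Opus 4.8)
The plan is to reduce the statement to the elementary fact that the symmetric difference of vertex stars is again an edge cut. First I would record the identity $\nabla U\oplus\nabla W=\nabla(U\triangle W)$ for all $U,W\subseteq V(G)$: an edge $xy$ lies in $\nabla U$ precisely when exactly one of $x,y$ belongs to $U$, so the indicator of ``$xy\in\nabla U$'' equals $\mathbf 1[x\in U]+\mathbf 1[y\in U]\pmod 2$; adding the corresponding expression for $W$ and using $\mathbf 1[z\in U\triangle W]\equiv\mathbf 1[z\in U]+\mathbf 1[z\in W]\pmod 2$ yields the claim. Iterating, the symmetric difference of any finite family of vertex stars $\nabla\{v_1\},\dots,\nabla\{v_m\}$ equals $\nabla U$, where $U$ is the set of vertices occurring an odd number of times among $v_1,\dots,v_m$. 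Since the switching-operation on $E(v)$ is by definition the symmetric difference with $\nabla\{v\}=E(v)$, this single identity does all the work.

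For the forward direction, assume $(G,\sigma)$ is balanced, i.e.\ $E^-(G,\sigma)$ is switching-equivalent to $\emptyset$. Then there is a sequence of vertices $v_1,\dots,v_m$ with $E^-(G,\sigma)=\emptyset\oplus E(v_1)\oplus\cdots\oplus E(v_m)=\nabla\{v_1\}\oplus\cdots\oplus\nabla\{v_m\}$, which by the identity above equals $\nabla U$ for the appropriate $U\subseteq V(G)$, as required.

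For the converse, assume $E^-(G,\sigma)=\nabla U$ and write $U=\{u_1,\dots,u_k\}$. Performing the switching-operations at $u_1,u_2,\dots,u_k$ in turn transforms $E^-(G,\sigma)$ into $\nabla U\oplus\nabla\{u_1\}\oplus\cdots\oplus\nabla\{u_k\}=\nabla U\oplus\nabla U=\nabla\emptyset=\emptyset$; hence $E^-(G,\sigma)$ is switching-equivalent to $\emptyset$, so $(G,\sigma)$ is balanced.

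I do not expect any genuine obstacle: the only point needing care is the parity bookkeeping in the star-symmetric-difference identity, together with the observation that switching at a vertex twice is the identity, so that the net effect of a sequence of switchings depends only on which vertices are used an odd number of times. Everything else is immediate from the definitions of \emph{switching-operation}, \emph{switching-equivalent}, and \emph{balanced} recalled above.
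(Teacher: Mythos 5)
Your argument is correct, and it is worth noting that the paper itself offers no proof of this lemma at all: it is quoted as a classical result and attributed to Harary \cite{FH}, so there is nothing in the text to compare against beyond the surrounding definitions. Working from those definitions, your reduction is exactly right: since switching at $v$ is the symmetric difference with $E(v)=\nabla\{v\}$ (loops play no role here), the identity $\nabla U\oplus\nabla W=\nabla(U\triangle W)$, plus the observation that only the parity of how often each vertex is switched matters, immediately gives both directions, and your parity bookkeeping for the identity is sound. The one caveat is that your proof establishes the lemma relative to the paper's definition of \emph{balanced} (negative edge set switching-equivalent to $\emptyset$), which is the right reading in context; Harary's original theorem is stated for the cycle-based notion of balance (every cycle carries an even number of negative edges), and proving the equivalence from that definition requires a genuinely different argument (e.g.\ two-coloring the vertices of each component along a spanning tree and checking the non-tree edges), which is what the citation to \cite{FH} is actually carrying. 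So your proof is complete for the statement as it functions in this paper, and more elementary than the classical route, but it does not by itself recover Harary's theorem in its original formulation.
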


Let $G$ be a graph and $X\subseteq E(G)$. Define $\sigma_X: E(G)\to \{-1,1\}$ such that $\sigma_X(e)=-1$ if $e\in X$ and $\sigma_X(e)=1$ otherwise. Then we have a signed graph $(G,\sigma_X)$ for a graph $G$ and a given edge subset $X$. The following is a straightforward observation by applying the above lemma to signed graphs $(G,\sigma_X)$ and $(G,\sigma_{E(G)\backslash X})$.

\begin{obs}\label{obs:equ2}
Let $G$ be a graph and $X\subseteq E(G)$. Then $X$ is switching-equivalent to $\emptyset$ if and only if $X=\nabla  U$ for some $U\subseteq V(G)$; and $X$ is switching-equivalent to $E(G)$ if and only if $E(G)\backslash X=\nabla U$ for some $U\subseteq V(G)$.
\end{obs}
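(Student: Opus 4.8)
The plan is to derive the Observation directly from Harary's Lemma~\ref{prop:equ}, together with the definitions of \emph{balanced} signed graph and of \emph{switching-equivalence} of edge sets, applied to the two signed graphs $(G,\sigma_X)$ and $(G,\sigma_{E(G)\backslash X})$. For the first equivalence, I would start from the signed graph $(G,\sigma_X)$, whose negative edge set is exactly $X$ by construction. By the definition of a balanced signed graph, $(G,\sigma_X)$ is balanced precisely when $E^-(G,\sigma_X)=X$ is switching-equivalent to $\emptyset$. Lemma~\ref{prop:equ} then says that this holds if and only if $X=\nabla U$ for some $U\subseteq V(G)$, which is exactly the first assertion.

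For the second equivalence, the key point is that taking complements inside $E(G)$ commutes with switching-operations: for any vertex $v$ and any $Y\subseteq E(G)$ one has $E(G)\backslash\bigl(E(v)\oplus Y\bigr)=E(v)\oplus\bigl(E(G)\backslash Y\bigr)$, since for symmetric differences $\overline{A\oplus B}=\overline{A}\oplus B$ (an element lies outside $A\oplus B$ exactly when it lies in both $A$ and $B$ or in neither, which is the same as lying in exactly one of $\overline A$ and $B$). Hence performing a switching-operation on an edge set is the ``same'' operation as performing it on the complementary edge set, so a sequence of switching-operations carries $X$ to $E(G)$ if and only if the same sequence carries $E(G)\backslash X$ to $E(G)\backslash E(G)=\emptyset$. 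In other words, $X$ is switching-equivalent to $E(G)$ if and only if $E(G)\backslash X$ is switching-equivalent to $\emptyset$. Now apply the first equivalence to $E(G)\backslash X$ (equivalently, apply Lemma~\ref{prop:equ} to $(G,\sigma_{E(G)\backslash X})$): $E(G)\backslash X$ is switching-equivalent to $\emptyset$ if and only if $E(G)\backslash X=\nabla U$ for some $U\subseteq V(G)$, which is the second assertion.

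No step here poses a genuine obstacle; the only point needing a line of care is the complementation identity $\overline{A\oplus B}=\overline{A}\oplus B$, which is precisely what lets one reduce the ``$E(G)$'' case to the already-established ``$\emptyset$'' case and invoke Harary's Lemma a second time. Everything else is unwinding the definitions, which is why the authors call it a straightforward observation.
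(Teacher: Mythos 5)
Your proof is correct and follows essentially the same route as the paper, which simply invokes Harary's Lemma~\ref{prop:equ} for the two signed graphs $(G,\sigma_X)$ and $(G,\sigma_{E(G)\backslash X})$; the complementation identity $E(G)\backslash(E(v)\oplus Y)=E(v)\oplus(E(G)\backslash Y)$ that you verify is precisely the unstated detail reducing the $E(G)$ case to the $\emptyset$ case.
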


Now, we are going to prove our main result, Theorem~\ref{thm:main}.
\medskip

\noindent{\bf Proof of Theorem~\ref{thm:main}.}  For any integer $k\ge 3$, take a copy of the complete bipartite graph $K_{k,k}$. Assume that $(A, B)$ be the bipartition of $K_{k,k}$. The bipartite graph $K_{k,k}$ is $k$-edge-colorable and let $c: E(K_{k,k})\to \{1,...,k\}$ be a $k$-edge-coloring. Let $e_1=u_1v_1$ and $e_2=u_2v_2$ be two edges of $K_{k,k}$ with the same color, say $c(e_1)=c(e_2)=1$. Without loss of generality, assume that $\{u_1,u_2\} \subseteq A$ and $\{v_1,v_2\}\subseteq B$. Delete $e_1$ and $e_2$ from $K_{k,k}$ and let $G_k(A,B)$ be the resulting bipartite graph. Note that $G_k(A,B)$ has a Hamilton cycle.

Take $m$ copies of $G_k(A,B)$ ($m\ge 2$) and denote them by $G_k^1(A^1, B^1), G_k^2(A^2, B^2),..., G_k^m(A^m,B^m)$. Add the following edges to join these copies of $G_k$ to get a new $k$-regular non-bipartite graph $G(k,m)$:
\[u_1^1u_2^1,\; v_1^1u_1^2,\; v_2^1u_2^2,\; \cdots ,\; v_1^iu_1^{i+1},\; v_2^iu_2^{i+1},\; \cdots ,\; v_1^{m-1}u_1^m,\; v_2^{m-1}u_2^m,\; v_1^mv_2^m\] where $v_1^i, v_2^i, u_1^i, u_2^i\in V(G_k^i)$ with degree $k-1$. Let $K$ be the set of these new edges. For example, see $G(3,2)$ in Figure~\ref{fig:exp}.

\begin{figure}[!hbtp] \refstepcounter{figure}\label{fig:exp}
\begin{center}
\includegraphics[scale=.8]{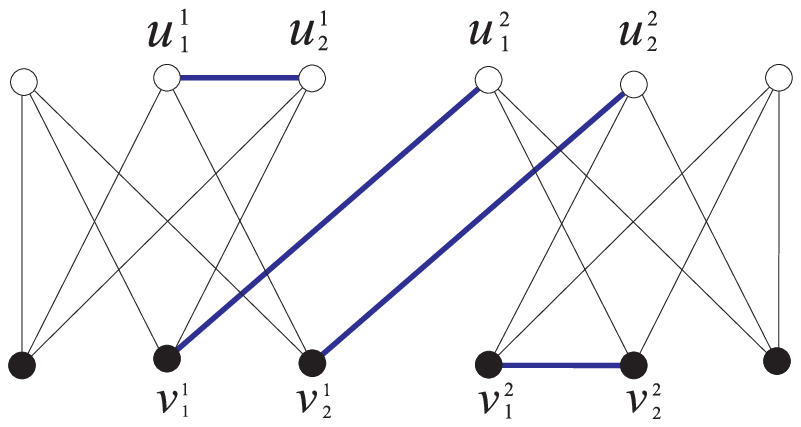}\medskip

{Figure~\ref{fig:exp}: An example $G(3,2)$: the set $K$ consisting of all blue edges.}
\end{center}
\end{figure}

Since $G_k (A,B)$ has a Hamiltonian cycle, the copy of $G_k^1(A^1,B^1)$ has a Hamilton cycle $C$ which together with $u_1^1u_2^1$ contains two odd cycles. Hence $G(k,m)$ is not bipartite. On the other hand, $G(k,m)$ has a $k$-edge-coloring which comes from a $k$-edge-coloring of $G_k$ together coloring all new edges by the color $c(e_1)=c(e_2)$. Hence $G(k,m)$ is a $k$-regular non-bipartite graph of class 1. Let $K$ be the set of all new edges.\medskip

\noindent{\bf Claim:} {\sl The edge set $K$ is an equivalent class of $G(k,m)$.} \medskip

\noindent{\em Proof of Claim.}  In the graph $G(k,m)$, two edges $v_1^iu_1^{i+1}$ and $ v_2^iu_2^{i+1}$ form a 2-edge-cut which separates $G(k,m)$ into two components with an even number of vertices. Hence a perfect matching of $G(k,m)$ contains either none of them or both of them. So $v_1^iu_1^{i+1}$ is equivalent to $v_2^iu_2^{i+1}$ for $i=1,...,m-1$.

Let $M$ be a perfect matching of $G(k,m)$ containing both $v_1^iu_1^{i+1}$ and $v_2^iu_2^{i+1}$. Consider the copy $G_k^{i+1}(A^{i+1}, B^{i+1})$. The perfect matching $M$ matches all vertices $A^{i+1}\backslash \{u_1^{i+1}, u_2^{i+1}\}$ to $k-2$ vertices of
$B^{i+1}$. So the remaining two vertices of $B^{i+1}$ are matched to two vertices of $A^{i+2}$ where $i+1\le m-1$. Hence $v_1^{i+1}u_1^{i+2}\in M$ and $v_2^{i+1}u_2^{i+2}\in M$. A similar argument shows that $K\subseteq M$. So all edges in $K$ are dependent on $v_j^iu_j^{i+1}$ for any $j\in \{1,2\}$ and $i\in \{1,...,m-1\}$, which implies that $K\backslash \{u_1^1u_1^2, v_1^mv_2^m\}$ is an equivalent class.

On the other hand, a perfect matching $M$ of $G(k,m)$ containing $u_1^1u_2^1$  matches $v_1^1$ and $v_2^1$ to $u_1^2$ and $u_2^2$ respectively. So all edges of $K$ are dependent on $u_1^1u_2^1$. By symmetry, all edges of $K$ are dependent on $v_1^mv_2^m$ too. It follows that $K$ is an equivalent class of $G(K,m)$. This completes the proof of Claim. \medskip

Let $X=\{u_1^1u_2^1, v_1^1u_1^2,...,v_1^iu_1^{i+1},...v_1^{m-1}u_1^m\}\subset K$. So $X$ is an equivalent class by Claim. Hence not a feasible set.  In the following, it suffices to show that $X$ is not switching-equivalent to either $\emptyset$ or $E(G(k,m))$.

First, note that $G(k,m)\backslash X$ is connected. Therefore, there is no $U\subseteq V(G(k,m))$ such that $X=\nabla U$. On the other hand, $G(k,m)\backslash X$ is not a bipartite graph because the edge $v_1^mv_2^m$ together with a Hamilton cycle $C$ of $G_k(A^m,B^m)$ contains two odd cycles which belong to $G(k,m)\backslash X$. So $G(k,m)$ does not have a vertex subset $U$ such that $E(G(k,m)\backslash X=\nabla U$. Hence $X$ is not switching-equivalent to $\emptyset$ or $E(G)$ by Observation~\ref{obs:equ2}. Hence $G(k,m)$ is a $k$-regular non-bipartite graph of class 1 which has a non-feasible set $X$ not switching-equivalent to $\emptyset$ or $E(G(k,m))$.

As $m\ge 2$ could be any integer, there are infinitely many such graphs $G(k,m)$ for any $k\ge 3$ with a non-feasible set which is not switching-equivalent to $\emptyset$ or  $E(G(k,m))$. This completes the proof of the theorem.
\qed \medskip

\noindent{\bf Remark.} In the above construction, the complete bipartite graph $K_{k,k}$ could be replaced by any $k$-regular bipartite graph $G$ with a Hamilton cycle $C$. For a $k$-edge-coloring of $G$, choose two edges with the same color but not from the cycle $C$ to be deleted. Let $G'$ be the resulting bipartite graph and then take $m$ copies of $G'$. Then the construction generates infinitely many other examples. \medskip

The graph $G(k,m)$ from the above construction is a matching-covered graph with an equivalent class of size $2m$. So the equivalent class of a matching-covered graph could goes to arbitrarily large. However, the edge-connectivity of $G(k,m)$ is 2. We do not know whether there are highly connected matching-covered graphs with a large equivalent class. Theorem~\ref{thm:lovasz} shows that bricks do not have  a large equivalent class. In the next section, we show that the edge-connectivity of a matching-covered bipartite graph $G$ is 2 if it has an equivalent class.

\section{Matchable bipartite graphs}

Let $G(A,B)$ be a matchable bipartite graph with bipartition $(A,B)$, and let $M$ be a perfect matching of $G(A,B)$.
A cycle $C$ of $G(A,B)$ is {\em $M$-alternating} if $E(C)\cap M$ is a perfect matching of $C$. Similarly, a path $P$ of $G(A,B)$ is {\em $M$-alternating} if $E(P)\cap M$ is a perfect matching of $P$. Hall's Theorem provides a characterization of matchable graph, which says that a bipartite graph $G(A,B)$ is matchable if and only if $|A|=|B|$ and for any $U\subseteq A$, $|N(U)|\ge |U|$. The following is a similar result for matching-covered bipartite graph.

\begin{lemma}[Theorem 4.1.1 in \cite{LP}]\label{lem:LP}
Let $G(A,B)$ be a bipartite graph. Then $G(A,B)$ is matching-covered if and only if $|A|=|B|$ and for any proper subset $U\ne \emptyset$ of $A$, $|N(U)|\ge |U|+1$.
\end{lemma}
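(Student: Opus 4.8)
The plan is to derive both implications from Hall's Theorem together with a short localization argument. Recall that a matching-covered graph is connected and possesses a perfect matching, so by Hall's Theorem $|A|=|B|$ and $|N(U)|\ge |U|$ for every $U\subseteq A$; thus on the "only if" side the sole extra content is the \emph{strict} inequality for proper nonempty $U$. First I would prove necessity by contradiction. Suppose $G(A,B)$ is matching-covered but some proper nonempty $U\subsetneq A$ is tight, i.e. $|N(U)|=|U|$ (it cannot be smaller, since a perfect matching exists). Then every perfect matching restricts to a bijection between $U$ and $N(U)$, so it matches all of $N(U)$ into $U$. Since $G$ is connected and $A\setminus U\ne\emptyset$, there is an edge leaving $U\cup N(U)$; as all neighbors of $U$ lie in $N(U)$, such an edge has the form $e=ab$ with $b\in N(U)$ and $a\in A\setminus U$. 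I claim $e$ lies in no perfect matching: if $M$ were a perfect matching with $e\in M$, then $b$ is matched to $a\notin U$, forcing $M$ to match the $|U|$ vertices of $U$ into $N(U)\setminus\{b\}$, a set of size $|U|-1$ — impossible. This contradicts matching-coveredness, so every proper nonempty $U$ satisfies $|N(U)|\ge |U|+1$.

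For sufficiency, assume $|A|=|B|$ and $|N(U)|\ge|U|+1$ for every proper nonempty $U\subsetneq A$. Taking $U=A$ gives $|N(A)|=|B|=|A|$, so Hall's condition holds and $G$ has a perfect matching; in particular $G$ has no isolated vertices, and the strict inequality also forces connectedness (a bipartite graph splitting as $G_1\sqcup G_2$ with both parts meeting $A$ would give $|B|=\sum_i|B\cap V(G_i)|\ge\sum_i(|A\cap V(G_i)|+1)=|A|+2$). It then remains to place an arbitrary edge $e=ab$ into a perfect matching, which I would do by deleting its endpoints: the graph $G'=G-a-b$ is balanced with bipartition $(A\setminus\{a\},\,B\setminus\{b\})$, and for any nonempty $W\subseteq A\setminus\{a\}$ — which is a proper nonempty subset of $A$ — we have $|N_{G'}(W)|\ge|N_G(W)|-1\ge(|W|+1)-1=|W|$. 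Hence $G'$ satisfies Hall's condition and has a perfect matching $M'$, and $M'\cup\{e\}$ is a perfect matching of $G$ containing $e$. As $e$ was arbitrary, $G$ is matching-covered.

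The only delicate point, and the one I would flag as the main obstacle, is the necessity direction: producing an edge that lies in no perfect matching requires a \emph{globally} chosen edge incident to the tight set $N(U)$, and this is precisely where connectedness of matching-covered graphs is used. The statement genuinely fails for a disconnected "matching-covered" graph (for instance a disjoint union of two copies of $K_2$, where $U$ consisting of one $A$-vertex has $|N(U)|=|U|$), so the convention that matching-covered graphs are connected is essential. Everything else is routine bookkeeping with Hall's inequality, the only numeric subtlety being the single unit lost when deleting the endpoint $b$ in the sufficiency argument, which is exactly absorbed by the strict inequality in the hypothesis.
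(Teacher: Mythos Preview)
Your proof is correct and is the standard argument; the paper itself does not prove this lemma but merely cites it as Theorem~4.1.1 in \cite{LP}, so there is nothing to compare against. One small presentational wrinkle: in the sufficiency direction you write ``Taking $U=A$ gives $|N(A)|=|B|=|A|$'', but $A$ is not a \emph{proper} subset, so the hypothesis does not apply to it directly. The clean way to get $N(A)=B$ (equivalently, no isolated vertices in $B$) is to apply the strict inequality to $U=A\setminus\{a\}$ for any $a$, which forces $N(A\setminus\{a\})=B$; this is a one-line fix and the rest of your argument goes through unchanged.
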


Let $G(A,B)$ be a matching-covered graph. For any two vertex $x\in A$ and $y\in B$ such that $xy\notin E(G(A,B))$, $G\cup \{xy\}$ is matching-covered by Lemma~\ref{lem:LP}. Hence, $G\cup \{xy\}$ has a perfect matching $M$ containing $xy$, and another perfect matching $M'$ containing an edge of $G$ incident with $x$. Therefore, the symmetric difference $M\oplus M'$ has a cycle $C$ containing $xy$. Further, $G$ has an $M'$-alternating path joining $xy$, which is $C\backslash \{xy\}$. So the following lemma holds.

\begin{lemma}\label{lem:path}
Let $G(A,B)$ be a matching-covered bipartite graph. Then for any vertex $x\in A$ and $y\in B$, there is an $M$-alternating joining $x$ and $y$ for some perfect matching $M$.
\end{lemma}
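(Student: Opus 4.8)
The plan is to reduce everything to a symmetric-difference argument inside $G(A,B)$ augmented by the edge $xy$. If $xy\in E(G(A,B))$ there is nothing to prove: since $G(A,B)$ is matching-covered it has a perfect matching $M$ with $xy\in M$, and the single-edge path $P=xy$ has $E(P)\cap M=\{xy\}$, which is a perfect matching of $P$, so $P$ is an $M$-alternating $x$--$y$ path. Hence from now on I would assume $xy\notin E(G(A,B))$ and set $G':=G\cup\{xy\}$.

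First I would observe that $G'$ is again matching-covered, which is immediate from Lemma~\ref{lem:LP}: adding an edge leaves $|A|=|B|$ unchanged, and for every proper nonempty $U\subseteq A$ one has $N_{G'}(U)\supseteq N_G(U)$, hence $|N_{G'}(U)|\ge |N_G(U)|\ge |U|+1$. Consequently $G'$ has a perfect matching $M$ with $xy\in M$. On the other hand $x$ is not isolated in $G$ (it is covered by any perfect matching of $G$), so $x$ has a neighbour $z\in B$ in $G$, and matching-coveredness of $G$ yields a perfect matching $M'$ of $G$, hence of $G'$, with $xz\in M'$. Since $xy\notin E(G)\supseteq M'$ we get $M\ne M'$ and $xy\in M\oplus M'$. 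Now $M\oplus M'$ is a disjoint union of even cycles alternating between $M$-edges and $M'$-edges; let $C$ be the cycle through $xy$. As $xy\in M$, the two edges of $C$ meeting $xy$ lie in $M'$, so deleting $xy$ from $C$ produces an $x$--$y$ path $P$ all of whose edges lie in $G$, along which the edges alternate and both end-edges belong to $M'$; therefore $E(P)\cap M'$ is a perfect matching of $P$, i.e. $P$ is the desired $M'$-alternating path.

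The argument is essentially routine, and I expect no serious obstacle. The only points that need a little care are the verification that $G'$ stays matching-covered (handled cleanly by Lemma~\ref{lem:LP}) and the parity bookkeeping at the end: one must use that $xy$ lies in $M$ rather than in $M'$, so that $C\setminus\{xy\}$ begins and ends with $M'$-edges and $E(P)\cap M'$ genuinely covers the endpoints $x$ and $y$ of $P$ rather than merely the interior vertices; and one must remember to dispose of the trivial case $xy\in E(G(A,B))$ separately.
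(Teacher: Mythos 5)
Your proposal is correct and follows essentially the same route as the paper: augment $G$ by the edge $xy$, use Lemma~\ref{lem:LP} to see the augmented graph is matching-covered, take a perfect matching $M$ through $xy$ and a perfect matching $M'$ of $G$ using an edge of $G$ at $x$, and extract the $M'$-alternating $x$--$y$ path from the cycle of $M\oplus M'$ through $xy$. Your write-up is in fact slightly more careful than the paper's (explicitly treating the case $xy\in E(G)$ and checking that both end-edges of the path lie in $M'$), but the idea is identical.
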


For matchable bipartite graphs, the Dulmage-Mendelsohn Decomposition \cite{DM} provides a structure characterization as follows.

\begin{lemma}[Dulmage and Mendelsohn, \cite{DM}]\label{lem:DM-decom}
Let $G(A,B)$ be a matchable bipartite graph. Then $G(A,B)$ has a decomposition into disjoint matching-covered subgraphs $Q_1,...,Q_k$ such that:\\
(1)  every $Q_i$ is vertex induced and, \\
(2) for any $e\in E[Q_i, Q_j]$ with $i, j\in \{1,2,...,k\}$, $e$ is not contained by any perfect matching of $G$.
\end{lemma}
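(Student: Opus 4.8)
The plan is to fix a single perfect matching $M$ of $G(A,B)$, to encode the $M$-alternating structure of $G$ in an auxiliary digraph, and to take the pieces $Q_i$ to be the subgraphs induced by its strongly connected components. Concretely, I would form the digraph $D$ with vertex set $M$ by placing an arc from $e=ab$ to $f=a'b'$ (with $a,a'\in A$ and $b,b'\in B$) whenever $ab'\in E(G)$; note that if such an arc is realized by an edge outside $M$ then $e\ne f$. The crucial bookkeeping step, which I would isolate as a sublemma, is a dictionary between the two sides: a simple directed cycle $e_1\to e_2\to\cdots\to e_t\to e_1$ of $D$ corresponds to a simple $M$-alternating cycle of $G$ whose matching edges are exactly $e_1,\dots,e_t$ and whose non-matching edges realize the arcs; conversely, an $M$-alternating cycle of $G$ through a non-matching edge $g$ joining the $A$-endpoint of $e\in M$ to the $B$-endpoint of $f\in M$ induces a directed cycle of $D$ through the arc $e\to f$ realized by $g$. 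Letting $D_1,\dots,D_k$ be the strongly connected components of $D$, letting $V_i$ be the set of endpoints of the matching edges lying in $D_i$, and setting $Q_i:=G[V_i]$, the fact that $M$ is perfect makes every vertex of $G$ an endpoint of exactly one edge of $M$, so $\{V_i\}$ partitions $V(G)$; hence the $Q_i$ are pairwise vertex-disjoint and vertex-induced, which is part~(1).

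I would then verify that each $Q_i$ is matching-covered. Set $M_i:=M\cap E(Q_i)$; this is a perfect matching of $Q_i$ since $V_i$ is exactly the union of the endpoints of the edges of $M_i$. Given an edge $g$ of $Q_i$, if $g\in M$ then $g\in M_i$ and we are done; otherwise write $g=ab'$ with $a\in A$ and $b'\in B$, and let $e,f\in M_i$ be the edges of $M$ at $a$ and at $b'$. Then $g$ realizes an arc $e\to f$ of $D$, and since $e$ and $f$ lie in the same component $D_i$ there is a simple directed path from $f$ to $e$, necessarily with all vertices in $D_i$. Closing it with the arc $e\to f$ gives a simple directed cycle of $D_i$ through that arc, which by the sublemma is a simple $M_i$-alternating cycle $C\subseteq E(Q_i)$ through $g$, so that $M_i\oplus E(C)$ is a perfect matching of $Q_i$ containing $g$.

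For part~(2), let $g$ join $Q_i$ and $Q_j$ with $i\ne j$. Each edge of $M$ has both endpoints in a single $V_\ell$, so $g\notin M$; write $g=ab'$ and let $e\in M_i$, $f\in M_j$ be the edges of $M$ at $a$ and at $b'$. If some perfect matching $M'$ of $G$ contained $g$, then $M\oplus M'$ would be a vertex-disjoint union of $M$-alternating cycles, and since $g\in M'\setminus M$ one of these cycles would pass through $g$; the sublemma would then produce a directed cycle of $D$ through the arc $e\to f$, placing $e$ and $f$ in the same strongly connected component and contradicting $i\ne j$. Hence no perfect matching of $G$ uses an edge between distinct $Q_i$'s.

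I expect the main obstacle to be making the sublemma fully rigorous: pinning down the orientation convention so that alternation in $G$ corresponds to direction in $D$, and checking that a simple directed path (respectively cycle) yields a genuinely \emph{simple} alternating cycle rather than merely a closed alternating walk (this is where one uses that the matching edges appearing are distinct, hence contribute distinct vertices). Once that dictionary is in place, everything else is short and uses only the elementary fact --- already invoked in the excerpt --- that the symmetric difference of two perfect matchings of a bipartite graph is a vertex-disjoint union of alternating cycles.
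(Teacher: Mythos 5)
The paper never proves this lemma --- it is quoted as a classical result of Dulmage and Mendelsohn \cite{DM} and used as a black box --- so there is no in-paper argument to compare yours against. Judged on its own, your proposal is a correct and essentially standard proof of the decomposition: fixing a perfect matching $M$, forming the exchange digraph $D$ on the edges of $M$, and taking strongly connected components does recover the elementary (matching-covered) pieces, and your dictionary is sound because the matching edges on a simple directed cycle of $D$ are pairwise distinct, hence the corresponding closed $M$-alternating walk in $G$ visits distinct vertices and is a genuine cycle; likewise the path you close up inside $D_i$ stays in $D_i$, so the resulting alternating cycle lies in $Q_i$ and witnesses that every edge of $Q_i$ is in a perfect matching of $Q_i$, while the converse direction of the dictionary applied to a cycle of $M\oplus M'$ gives part (2). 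Two small finishing touches would make it airtight: (i) if ``matching-covered'' is read, as is usual, to include connectedness, add the one-line observation that any two edges of $M$ in the same strongly connected component are joined by a directed path inside that component, whose corresponding alternating path has all its vertices in $V_i$, so $Q_i$ is connected (and a singleton component yields $Q_i\cong K_2$, a degenerate piece one must admit --- a quirk of the paper's definition of $1$-extendable rather than a defect of your argument); (ii) note explicitly that in (2) the intended reading is $i\ne j$, as you assumed. Your construction also shows, although the statement does not ask for it, that the decomposition is independent of the choice of $M$, since the $V_i$ turn out to be exactly the vertex sets of the components of the subgraph of allowed edges --- a fact the paper invokes later when it asserts uniqueness of the decomposition.
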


\begin{figure}[!hbtp] \refstepcounter{figure}\label{fig2}
\begin{center}
\includegraphics[scale=1]{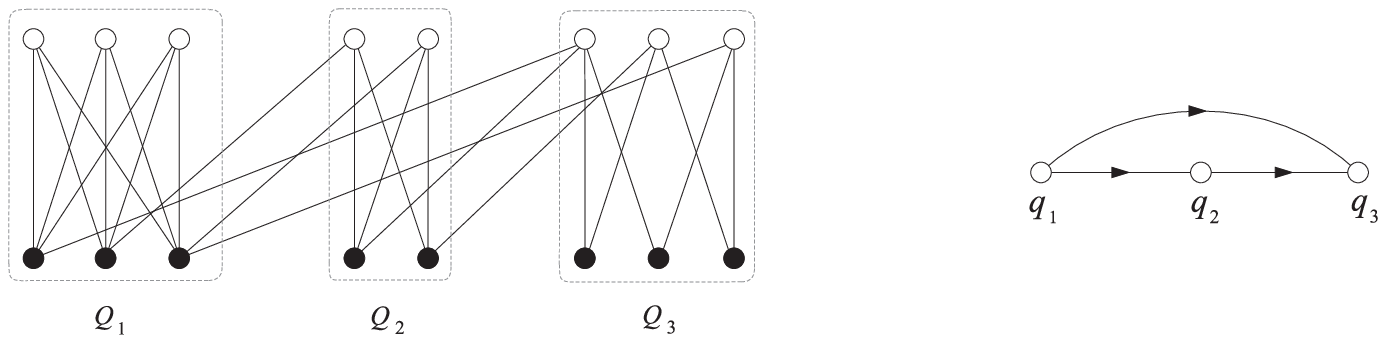}\medskip

{Figure~\ref{fig2}: The Dulmage-Mendelsohn Decomposition of $G(A,B)$ and the Dulmage-Mendelsohn digraph $D$ (right).}
\end{center}
\end{figure}

For a matchable bipartite graph, the Dulmage-Mendelsohn Decomposition is unique.  Let $G(A,B)$ be a matchable bipartite graph and let $\mathcal G=\{Q_1,...,Q_k\}$ be the  Dulmage-Mendelsohn Decomposition.  For any $1\le i\le k$, identify all vertices in $A\cap Q_i$ to a vertex $u_i$ and all vertices in $B\cap Q_i$ to a vertex $v_i$  and delete all multiple-edges to get a simple bipartite graph. For an edge $u_iv_j$, orient it from $u_i$ to $v_j$ if $i=j$ and from $v_j$ to $u_i$ if $i\ne j$. Since the Dulmage-Mendelsohn Decomposition is unique, the digraph generated this way is unique and is denoted by $D'$. The {\em Dulmage-Mendelsohn digraph} $D$ is obtained from $D'$ by contracting all arcs $u_iv_i$ to a single vertex $q_i$ for all $1 \le i\le k$. (For example, see Figure~\ref{fig2}.) So if $G(A,B)$ is matching-covered, then $\mathcal G$ has only one graph and hence $D$ has one vertex but no arcs. The following is a property of the Dulmage-Mendelsohn digraph $D$ of a matchable bipartite graph $G(A,B)$.

\begin{lemma}\label{lem:acyclic}
Let $G(A,B)$ be a connected matchable bipartite graph. If $G(A,B)$ is not matching-covered, then
the Dulmage-Mendelsohn digraph $D$ of  $G(A,B)$ is acyclic.
\end{lemma}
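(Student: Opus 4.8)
The plan is to argue by contradiction: suppose the Dulmage-Mendelsohn digraph $D$ of $G(A,B)$ contains a directed cycle, and derive that some edge between distinct components $Q_i$ and $Q_j$ lies in a perfect matching of $G$, contradicting property (2) of Lemma~\ref{lem:DM-decom}. First I would recall how arcs of $D$ arise from the intermediate digraph $D'$: within one component $Q_i$ the contracted arc runs $u_i\to v_i$ (and these are the arcs collapsed to form the single vertex $q_i$ of $D$), while an edge of $G$ joining $A\cap Q_i$ to $B\cap Q_j$ with $i\ne j$ gives an arc $v_j\to u_i$ in $D'$. Thus a directed cycle in $D$ lifts to a closed directed walk in $D'$ that alternates between "internal" arcs $u_i\to v_i$ and "crossing" arcs $v_j\to u_i$; say it visits components $Q_{i_1}, Q_{i_2}, \dots, Q_{i_t}, Q_{i_1}$ in order, using a crossing edge $f_s\in E[A\cap Q_{i_s}, B\cap Q_{i_{s+1}}]$ for each $s$ (indices mod $t$), where $t\ge 2$ because $D$ has at least two vertices (as $G$ is not matching-covered).

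The heart of the argument is to build a perfect matching of $G$ containing one of these crossing edges $f_s$. Since each $Q_i$ is matching-covered, by Lemma~\ref{lem:path} (applied inside $Q_{i_s}$) there is, for each $s$, a perfect matching $M_s$ of $Q_{i_s}$ and an $M_s$-alternating path $P_s$ in $Q_{i_s}$ joining the endpoint of $f_{s-1}$ in $B\cap Q_{i_s}$ to the endpoint of $f_s$ in $A\cap Q_{i_s}$; I would take $P_s$ so that its matching edges (the edges in $M_s\cap P_s$) together with the unmatched vertices behave correctly, i.e. $P_s$ starts and ends with non-matching edges so that after concatenating the $P_s$ with the crossing edges $f_s$ we get a closed alternating structure. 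Concretely, the union $(\bigcup_s P_s)\cup\{f_1,\dots,f_t\}$ is an even closed walk; taking its edge set and alternating the roles of "in/out" gives a set of edges $N$ that is a perfect matching of the subgraph $\bigcup_s Q_{i_s}$ using exactly the crossing edges $f_1,\dots,f_t$ plus, inside each $Q_{i_s}$, some perfect matching of $Q_{i_s}$ minus the two path-endpoints. Finally, on every component $Q_j$ not among $Q_{i_1},\dots,Q_{i_t}$ pick an arbitrary perfect matching (each $Q_j$ is matchable). The union is a perfect matching $M$ of $G(A,B)$ with $f_1\in M$, contradicting Lemma~\ref{lem:DM-decom}(2).

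I expect the main obstacle to be the bookkeeping in the previous paragraph: one must choose the alternating paths $P_s$ inside the components with the correct parity at their endpoints so that the concatenation with the crossing edges is genuinely alternating and yields a perfect matching of $\bigcup_s Q_{i_s}$ covering every vertex exactly once. Two technical points need care. First, the endpoints of successive crossing edges $f_{s-1}$ and $f_s$ inside $Q_{i_s}$ might coincide (if that component contributes only a single vertex on one side after contraction), in which case the "path" $P_s$ degenerates and one simply takes $f_{s-1}, f_s$ consecutive in the matching — this case should be handled separately but is easy. Second, one should confirm that the crossing edges $f_s$ are pairwise vertex-disjoint; this follows because distinct crossing arcs of the directed cycle in $D$ correspond to distinct pairs of components, and an edge of $G$ is incident to vertices of exactly one component on each side, so no vertex is shared between two of the $f_s$ unless two of the components $Q_{i_s}$ coincide — which cannot happen along a (simple) directed cycle of $D$. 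With these points addressed, the contradiction is complete, so $D$ must be acyclic.
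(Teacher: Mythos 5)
Your overall strategy is the same as the paper's: lift a directed cycle of $D$ to an alternating cycle of $G$ by joining alternating paths inside the components (obtained from Lemma~\ref{lem:path}) with the crossing edges, then switch along this cycle to produce a perfect matching containing a crossing edge, contradicting Lemma~\ref{lem:DM-decom}(2). However, the bookkeeping step that you yourself identify as the crux is resolved with the wrong parity. You require each $P_s$ to ``start and end with non-matching edges,'' i.e.\ edges outside $M_s$. With that choice the endpoints of $P_s$ are matched by $M_s$ off the path, while the crossing edges $f_s$ lie in no $M_s$; hence at every junction the closed walk has two consecutive non-matching edges, so it is not alternating with respect to $M=\bigcup_s M_s$, and ``alternating the roles of in/out'' does not produce a matching (the $M_s$-partners of the path endpoints are left uncovered, or covered twice). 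What is needed is the opposite parity: each $P_s$ must begin and end with edges of $M_s$, so that every vertex of $P_s$ is matched inside $P_s$; then $M_s\oplus E(P_s)$ is a perfect matching of $Q_{i_s}$ minus the two endpoints, and these two vertices are picked up by $f_{s-1}$ and $f_s$ --- which is exactly the structure you describe one sentence later.

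A related omission is that you never argue such paths with the required end-edge parity exist; Lemma~\ref{lem:path} as stated gives no control over the end edges. The control comes from its derivation: the path arises as an alternating cycle of $G\cup\{xy\}$ minus the edge $xy$, which lies in the other perfect matching, so both end edges of the resulting path belong to the perfect matching with respect to which it alternates. This is also precisely what makes the paper's (terse) assertion that $C'$ is $M$-alternating correct, and with this parity fixed your argument goes through essentially as in the paper. Two minor remarks: your worry that the endpoints of $f_{s-1}$ and $f_s$ inside $Q_{i_s}$ might coincide is vacuous, since one lies in $B\cap Q_{i_s}$ and the other in $A\cap Q_{i_s}$; and your disjointness check for the crossing edges is fine, since the components along a simple directed cycle of $D$ are distinct.
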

\begin{proof}
Let $G(A,B)$ be a matchable bipartite graph and let $\mathcal G=\{Q_1,...,Q_k\}$ be the  Dulmage-Mendelsohn Decomposition. Since $G(A,B)$ is not matching-covered, then $k\ge 2$. Let $D$ be the Dulmage-Mendelsohn digraph. Since $G(A,B)$ is connected, $D$ has at least one arc.
Suppose to the contrary that $D$ has a directed cycle $C$. Without loss of generality, assume that $C=q_1q_2\cdots q_mq_1$ for some $2\le m\le k$ (relabeling if necessary).

By the definition of $D$, for each arc $q_iq_{i+1}$ where $i$ and $i+1$ are taken modulo $m$, $G(A,B)$ has an edge joining a vertex $u_{i+1}\in Q_{i+1}\cap A$ and a vertex $v_i\in Q_i\cap B$ which is not contained by any perfect matching of $G(A,B)$ by (2) in Lemma~\ref{lem:DM-decom}. In each $Q_i$ with $1\le i\le m$, there exists an $M_i$-alternating path $P_i$ joining $u_i$ and $v_i$ for some perfect matching $M_i$ of $Q_i$ by Lemma~\ref{lem:path}. For $m+1
\le i \le k$, let $M_i$ be a perfect matching of $Q_i$ which is matching-covered. Let $M=\cup_{i=1}^k M_i$ and
\[C':=(\cup_{i=1}^m P_i)\cup \{v_iu_{i+1} | i, i+1\in \{1,...,m\} \pmod m\}.\] Then $M$ is a perfect matching of $G$ and $C'$ is an $M$-alternating cycle of $G$. So the symmetric difference $M\oplus E(C')$ is another perfect matching containing edges $v_iu_{i+1}$, which contradicts that $v_iu_{i+1}$ is not contained in any perfect matching of $G(A,B)$. This completes the proof.
\end{proof}

By Lemma~\ref{lem:acyclic} and the definition of the Dulmage-Mendelsohn digraph, if $D$ has an arc $q_iq_j$, then all edges of $E(Q_i,Q_j)$ join vertices of $Q_i\cap B$ and the vertices of $Q_j\cap A$. In other words,  $E[Q_i\cap A, Q_j\cap B]=\emptyset$. On the other hand, if $E[Q_i\cap B, Q_j\cap A]\ne \emptyset$, then $q_iq_j$ is an arc of $D$.

Let $G(A,B)$ be a matchable bipartite graph, but not matching-covered. Then, by Lemma~\ref{lem:acyclic}, the Dulmage-Mendelsohn digraph $D$ of $G(A,B)$ is acyclic.  A {\em directed cut} $S$ of $D$ is a subset of arcs of $D$ which separates $D$ into two components and all arcs of $S$ are oriented from the one component to the other. A family of directed paths $\mathcal P$ {\em intersects} all directed cuts of $D$ if for any directed cut $S$ of $D$, there exists a path $P\in \mathcal P$ such that $E(P)\cap S\ne \emptyset$.   The following result shows how many new edges should be added to a non-matching-covered bipartite graph to obtain a matching-covered bipartite graph.

\begin{thm}\label{thm:edge}
Let $G(A,B)$ be a matchable bipartite graph and let $G'(A,B)$ be a smallest matching-covered bipartite graph such that $G(A,B)\subseteq G'(A,B)$. Then \[ |E(G'(A,B))|\le |E(G(A,B))|+\ell,\] where $\ell$ is the smallest size of a family of directed paths intersecting all directed cuts of the Dulmage-Mendelsohn digraph $D$ of $G(A,B)$.
\end{thm}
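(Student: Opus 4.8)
The plan is to build the desired matching-covered supergraph $G'(A,B)$ by adding, for each directed path $P$ in an optimal family $\mathcal P$ (of size $\ell$) intersecting all directed cuts of the Dulmage-Mendelsohn digraph $D$, exactly one new edge of $G$, obtained by routing $P$ back through the $D$-vertices it passes. Concretely, suppose $P = q_{i_0}q_{i_1}\cdots q_{i_r}$ is a directed path in $D$. By the remark following Lemma~\ref{lem:acyclic}, each arc $q_{i_{t}}q_{i_{t+1}}$ of $D$ corresponds to at least one actual edge of $G$ joining a vertex of $Q_{i_t}\cap B$ to a vertex of $Q_{i_{t+1}}\cap A$; pick one such edge for each arc of $P$, and inside each intermediate block $Q_{i_t}$ ($1\le t\le r-1$) use Lemma~\ref{lem:path} to pick an $M_{i_t}$-alternating path through $Q_{i_t}$ connecting the two endpoints forced on it by the chosen edges on the two incident arcs. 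This produces an alternating path $R_P$ in $G$ running from some vertex $a_P\in Q_{i_0}\cap A$ to some vertex $b_P\in Q_{i_r}\cap B$; I then add the single new edge $a_Pb_P$. Doing this for all $P\in\mathcal P$ adds exactly $\ell$ edges, giving a graph $G^+$ with $|E(G^+)| = |E(G)|+\ell$, and it suffices to prove $G^+$ is matching-covered, since then the smallest such supergraph $G'$ has at most that many edges.

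To verify that $G^+$ is matching-covered I would use Lemma~\ref{lem:LP}: I must show that for every proper nonempty $U\subsetneq A$ we have $|N_{G^+}(U)|\ge |U|+1$. Equivalently, I want to show that the Dulmage-Mendelsohn digraph $D^+$ of $G^+$ has exactly one vertex, i.e. $G^+$ is itself one Dulmage-Mendelsohn block. The key structural fact is that every Dulmage-Mendelsohn block $Q_i$ of $G$ is matching-covered, hence internally "non-separating" in the Lemma~\ref{lem:LP} sense; the only way $G$ fails to be matching-covered is via the acyclic digraph $D$, and a barrier $U$ with $|N_G(U)|=|U|$ corresponds exactly to the source side of a directed cut of $D$ (the set of blocks $Q_i$ reachable only from "below" the cut). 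Adding the edge $a_Pb_P$ for a path $P$ crossing that cut connects a vertex on the target side of the cut to a vertex on the source side, strictly increasing $|N_{G^+}(U)|$ past $|U|$. Since $\mathcal P$ hits every directed cut of $D$, no barrier survives in $G^+$, so $G^+$ is matching-covered.

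The main obstacle I expect is the bookkeeping that translates "barrier $U$ of $G$" into "source side of a directed cut of $D$" cleanly, and then checking that the added chord $a_Pb_P$ really lands on opposite sides of that cut — the subtlety is that the alternating path $R_P$ may weave through several blocks, and I must make sure the chord's two endpoints are separated by the particular cut under consideration, not just by some cut. The right way to handle this is to note that the blocks $Q_{i_0},\dots,Q_{i_r}$ visited by $P$ occur in the topological order induced by the acyclic $D$, so a directed cut $S$ separating "lower" blocks from "higher" ones is crossed by $P$ iff $P$ has an arc in $S$, and then $a_P$ lies in a block on the lower (source) side while $b_P$ lies in a block on the higher (sink) side; this is exactly the condition needed. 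A secondary point requiring care is confirming that the alternating paths $R_P$ chosen inside distinct blocks, for distinct members of $\mathcal P$, can coexist with a single global perfect matching witnessing each new edge — but this follows because the $R_P$ are alternating with respect to the fixed block matchings $M_i$, and taking the symmetric difference of $M=\bigcup_i M_i$ with $R_P$ together with the new chord $a_Pb_P$ yields a perfect matching of $G^+$ containing $a_Pb_P$, so every new edge lies in a perfect matching, while every old edge already did in the larger matching-covered $G^+$.
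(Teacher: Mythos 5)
Your construction of the augmented graph is the same as the paper's: for each path $P$ in an optimal family $\mathcal P$ you add one new edge joining a vertex of $A\cap Q_{i_0}$ (initial block of $P$) to a vertex of $B\cap Q_{i_r}$ (terminal block), i.e.\ the edge realizing the back arc that closes $P$ into a directed cycle, so in both proofs the bound reduces to showing the augmented graph $G^+$ is matching-covered. Where you genuinely differ is the verification. The paper argues edge-by-edge: the back arcs make the digraph strongly connected, every arc then lies on a directed cycle, and (as in Lemma~\ref{lem:acyclic}) such a cycle lifts to an alternating cycle, giving a perfect matching through the corresponding edge; this covers the old inter-block edges and the new chords in one stroke. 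You instead verify the Hall-type criterion of Lemma~\ref{lem:LP}: a tight set $U\subsetneq A$ with $|N(U)|=|U|$ must, by Lemma~\ref{lem:LP} applied inside each matching-covered block, be the union of the $A$-sides of a set $I$ of blocks with no arcs of $D$ entering $I$, i.e.\ the source side of a dicut; since a directed path can never enter $I$, any $P\in\mathcal P$ using an arc of that dicut starts in $I$ and ends outside it, so its chord $a_Pb_P$ contributes a neighbour $b_P\notin N_G(U)$ and destroys the tight set. This is correct and complete in outline, and it buys a simplification: the internal alternating paths $R_P$ are not needed at all, only the choice of one $A$-vertex in the initial block and one $B$-vertex in the terminal block, whereas the paper's route provides the cycle-lifting machinery that it reuses in the proofs of Theorems~\ref{thm:main3} and \ref{thm:main2}.

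Two small blemishes, neither fatal to your main route. First, as described $R_P$ routes only through the intermediate blocks, so its ends would be a $B$-vertex of $Q_{i_0}$ and an $A$-vertex of $Q_{i_r}$, not $a_P$ and $b_P$; to make the closing chord give an alternating cycle you must also thread alternating paths through the two end blocks via Lemma~\ref{lem:path}, exactly as in the proof of Lemma~\ref{lem:acyclic}. Second, your closing remark that every old edge ``already'' lay in a perfect matching of the matching-covered $G^+$ is circular if read as part of the proof: the old inter-block edges lie in no perfect matching of $G$, and their coverage in $G^+$ is precisely what either your Hall argument or the paper's alternating-cycle argument must deliver.
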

\begin{proof}
Let $G(A,B)$ be a matchable bipartite graph  and let $D$ be the Dulmage-Mendelsohn digraph. If $G(A,B)$ is a matching-covered graph, then $D$ is a single vertex and $\mathcal P=\emptyset$. The theorem holds trivially. So in the following, assume that $G(A,B)$ is not matching-covered. Therefore, the Dulmage-Mendelsohn Decomposition $\mathcal G=\{Q_1,...,Q_k\}$ of $G(A,B)$ has at least two graphs, i.e., $k\ge 2$.  By Lemma~\ref{lem:acyclic}, $D$ is acyclic. Let $\mathcal P$ be a family of directed paths intersecting all directed cuts of $D$ such that $|\mathcal P|=\ell$.

For any $P\in \mathcal P$, add an arc $e_P$ from the terminal vertex of $P$ to the initial vertex of $P$, and let the new digraph be $D'$. Since $\mathcal P$ intersects all directed cuts of $D$, $D'$ has no directed cut and hence is strongly-connected. Hence, for any arc $e$ of $D$, $D'$ has a directed cycle containing $e$.

For each new arc $e_P=x_ix_j$, then add a new edge to $G$ joining a vertex $v_i\in B\cap Q_i$ and a vertex $u_j\in A\cap Q_j$. Let the new bipartite graph be $G'(A,B)$. Let $e$ be an edge of $G'(A,B)$. If $e$ is an edge of some $Q_i$, then $e$ is contained in a perfect matching of $G(A,B)$ which is also a perfect matching of $G'(A,B)$. If $e$ is an edge of $E[Q_i, Q_j]$, the digraph $D'$ has a directed cycle $C$ containing the arc  $q_iq_j$ or $q_jq_i$. By a similar argument as in Lemma~\ref{lem:acyclic}, the directed cycle $C$ of $D'$ corresponds to an $M$-alternating cycle in $G'(A,B)$ for some perfect matching $M$ of $G'(A,B)$. Therefore, $e$ is contained in a perfect matching of $G'(A,B)$. So $G'(A,B)$ is matching-covered. Hence, the number of edges of a smallest matching-covered graph containing $G(A,B)$ is at most $|E(G'(A,B))|=|E(G(A,B))|+|\mathcal P|=|E(G(A,B))|+\ell$.
\end{proof}

Now, we are going to prove our main results, Theorems~\ref{thm:main3} and \ref{thm:main2}.\medskip

\noindent{\bf Proof of Theorem~\ref{thm:main3}.} Let $G(A,B)$ be a matching-covered bipartite graph.

First, assume that $G(A,B)$ is strongly coverable. Let $S$ be an edge-cut of $G(A,B)$, which separates $G(A,B)$ into two balanced components $G_1(A_1,B_1)$ and $G_2(A_2,B_2)$. Then $S=E[A_1,B_2]\cup E[A_2,B_1]$.  We need to show that $|E[A_1,B_2]|\ge 2$ and $|E[B_1, A_2]|\ge 2$. If not, we may assume that $|E[A_1,B_2]|\le 1$ by symmetry. Let $e\in E[A_1, B_2]$. Then $G(A,B)\backslash e$ has no edges joining vertices of $A_1$ to vertices $B_2$. Since $G_1(A_1,B_1)$ is balanced, any perfect matching of $G(A,B)\backslash e$ does not contain edges from $E[B_1,A_2]$. Therefore, $G(A,B)\backslash e$ is not matching-covered. Hence $G(A,B)$ is not strongly coverable, a contradiction to the assumption that $G(A,B)$ is strongly coverable.

In the following, assume that every edge-cut $S$ separating $G(A,B)$ into two balanced components $G_1(A_1,B_1)$ and $G_2(A_2,B_2)$ satisfies $|E[A_1,B_2]|\ge 2$ and $|E[A_2,B_1]|\ge 2$. We need to show that $G(A,B)$ is strongly coverable. In other words, for any edge $e$, $G(A,B)\backslash e$ is matching-covered.
If not, then $G(A,B)$ has an edge $e$ such that $G(A,B)\backslash e$ is not matching-covered. Let $\mathcal G=\{Q_1, Q_2,...,Q_k\}$ be the Dulmage-Mendelsohn Decomposition of $G(A,B)\backslash e$, and let $D$ be the Dulmage-Mendelsohn digraph. By Lemma~\ref{lem:acyclic}, $D$ is a cyclic. By Theorem~\ref{thm:edge}, adding one more arc to $D$ generates a strongly connected digraph $D'$. Therefore, $D$ has only exactly one sink and one source. Without loss of generality, assume $q_1$ and $q_k$ be the source and sink of $D$, respectively, where $q_1$ and $q_k$ correspond to the graphs $Q_1$ and $Q_k$.  By the definition of $D$, all edges of $G(A,B)\backslash e$ joining vertices of $Q_1$ to vertices $Q_i$ with $i\ne 1$ are incident with vertices in $Q_1\cap B$. So the edge $e$ joins a vertex in $Q_1\cap A$ and a vertex in $Q_k\cap B$. Let $S=\nabla V(Q_1)$, the set of all edges joining vertices of $Q_1$ and vertices of its component in $G(A,B)$. Then $S$ is an edge-cut separating $G(A,B)$ into $G_1(A_1,B_1)=Q_1$ and $G_2(A_2,B_2)=G(A,B)\backslash Q_1$, where $A_1=V(Q_1)\cap A$ and $B_1=V(Q_1)\cap B$. Note that both $G_1(A_1,B_1)$ and $G_2(A_2,B_2)$ are matchable and therefore balanced. However, $|E(A_1, B_2)|=|\{e\}|=1$, a contradiction to the assumption. This completes the proof.
\qed
\bigskip

\noindent{\bf  Proof of Theorem~\ref{thm:main2}.} Let $G(A,B)$ be a matching-covered bipartite graph.

First, assume that $G(A,B)$ has a 2-edge-cut $S$ which separates $G(A,B)$ into two balanced components $G_1(A_1,B_1)$ and $G_2(A_2,B_2)$. Then $G_1(A_1,B_1)$ has an even number of vertices because $|A_1|=|B_1|$. Therefore, every perfect matching $M$ of $G(A,B)$ has an even number of edges of $S$. Hence, $|S\cap M|=0$ or $|S\cap M|=2$. In other words, $S\cap M=\emptyset$ or $S\subseteq M$. So $S$ is an equivalent class of $G(A,B)$.

In the following, assume that $G(A,B)$ has an equivalent class $K$. Let $e, e'\in K$. It suffices to show that $e$ is contained by a 2-edge-cut $S$ which separates $G(A,B)$ into two balanced components. Since $e$ is equivalent to $e'$, it follows that $G(A,B)\backslash e$ has no perfect matching containing $e'$.
Let $\mathcal G=\{Q_1,..,Q_k\}$ be the Dulmage-Mendelsohn Decomposition of $G(A,B)\backslash e$ and let $D$ be the Dulmage-Mendelsohn digraph. By Lemma~\ref{lem:DM-decom}, every $Q_i$ is matching-covered and $e'$ joins two vertices from different components, say $Q_i$ and $Q_j$ with $i\ne j$. Without loss of generality, assume that $q_iq_j$ is an arc of $D$  where $q_i$ and $q_j$ correspond to $Q_i$ and $Q_j$. By the definition of the Dulmage-Mendelsohn digraph, $e'$ joins a vertex of $Q_i\cap B$ and a vertex of
$Q_j\cap A$. \medskip

\noindent{\bf Claim:} {\sl The arc $q_iq_j$ is a cut-edge of $D$.} \medskip

\noindent{\it Proof of Claim.} If not, let $T$ be a directed cut containing $q_iq_j$. Then $T$ contains another arc, say $e''$. By Lemma~\ref{lem:acyclic}, $D$ is acyclic. Since $G(A,B)$ is matching-covered, by Theorem~\ref{thm:edge}, $D$ has one directed path $P$ intersecting all directed cuts. So $D$ has
exactly one source and one sink, say $q_1$ and $q_k$ respectively, where $q_1$ and $q_k$ correspond to $Q_1$ and $Q_k$. By Theorem~\ref{thm:edge}, adding an arc from $q_k$ to $q_1$ generates a strongly connected digraph $D'$. So there is a directed cycle $C$ containing $e''$. Note that $C$ contains exactly one arc in $T$. It follows that $C$ is still a directed cycle of $D'\backslash q_iq_j$. The directed cycle $C$ corresponds to an $M$-alternating cycle of $G(A,B)\backslash e'$ containing the edge $e$ for some perfect matching $M$ of $G(A,B)$. Therefore,
$G(A,B)$ has a perfect matching containing $e$ but not $e'$, contradicting that $e$ and $e'$ are equivalent to each other. This completes the proof of Claim. \medskip

By Claim, $E(Q_i, Q_j)\cup \{e\}$ is an edge-cut of $G(A,B)$. All edges in $E(Q_i,Q_j)\backslash \{e\}$ join a vertex of $Q_i\cap B$ and a vertex of $Q_j\cap A$. If $E(Q_i, Q_j)$ contains an edge $f$ other than $e$ and $e'$, then $G(A,B)\backslash e'$ is matching-covered because the Dulmage-Mendelsohn digraph of $G(A,B)\backslash \{e,e'\}$ is the same as $D$. Therefore, adding the edge $e$ makes $G(A,B)\backslash \{e'\}$ matching-covered. So $G(A,B)$ has a perfect matching containing $e$ but not $e'$, contradicting $e,e'\in K$ again.  The contradiction implies that $\{e, e'\}$ is a 2-edge-cut, which separates $G(A,B)$ into two components such that, for any $Q_m$ with $1\le m\le k$, a component of $G(A,B)\backslash \{e,e'\}$ either contains $Q_m$ or does not intersect $Q_m$. Hence, every component of $G(A,B)\backslash \{e, e'\}$ is balanced. This completes the proof. \qed

\medskip

\noindent{\bf Remark.} In \cite{CLM}, Carvalho et. al. proved that two equivalent edges $e$ and $e'$ of a matching-covered bipartite graph form an edge cut. Theorem~\ref{thm:main2} can be proved by the result of Carvalho et. al. easily. The proofs of Theorems~\ref{thm:main3} and \ref{thm:main2} in this paper are based on the Dulmage-Mendelsohn Decomposition which provides insight into the structure of matchable bipartite graphs.


\end{document}